\numberwithin{equation}{section}
\newtheorem{theorem}{Theorem}[section]
\newtheorem{lemma}[theorem]{Lemma}
\newtheorem{corollary}[theorem]{Corollary}
\newtheorem{proposition}[theorem]{Proposition}
\theoremstyle{definition}
\newtheorem{definition}[theorem]{Definition}
\newtheorem{example}[theorem]{Example}
\newtheorem{remark}[theorem]{Remark}
\renewcommand{\epsilon}{\varepsilon}
\newcommand{\A}{\mathcal{A}}
\newcommand{\C}{\mathbb{C}}
\newcommand{\E}{\mathbb{E}}
\newcommand{\F}{\mathbb{F}}
\newcommand{\M}{\mathcal{M}}
\newcommand{\N}{\mathbb{N}}
\newcommand{\R}{\mathbb{R}}
\newcommand{\id}{\operatorname{id}}
\newcommand{\ev}{\operatorname{ev}}
\newcommand{\ran}{\operatorname{ran}}
\newcommand{\vN}{\operatorname{vN}}
\begin{document}

\title[Absence of algebraic relations and of zero divisors]{Absence of algebraic relations and of zero divisors under the assumption of full non-microstates free entropy dimension}

\author[T. Mai]{Tobias Mai}
\address{Universit\"{a}t des Saarlandes, FR $6.1-$Mathematik, 66123 Saarbr\"{u}cken, Germany}
\email{mai@math.uni-sb.de}

\author[R. Speicher]{Roland Speicher}
\address{Universit\"{a}t des Saarlandes, FR $6.1-$Mathematik, 66123 Saarbr\"{u}cken, Germany}
\email{speicher@math.uni-sb.de}

\author[M. Weber]{Moritz Weber}
\address{Universit\"{a}t des Saarlandes, FR $6.1-$Mathematik, 66123 Saarbr\"{u}cken, Germany}
\email{weber@math.uni-sb.de}

\date{\today}

\thanks{This work was supported by the ERC Advanced Grant "Non-commutative distributions in free probability" and by a grant from the DFG
(SP-419-8/1).
\\
We thank H. Bercovici, Y. Dabrowski and D. Shlyakhtenko for discussions and comments on an earlier version of this paper.}

\keywords{free probability theory, conjugate variables, free Fisher information, non-microstates free entropy dimension, algebraic relations, zero divisors}

\subjclass[2000]{46L54. 60B20}

\begin{abstract}
We show that in a tracial and finitely generated $W^\ast$-probability space existence of conjugate variables excludes algebraic relations for the generators.
Moreover, under the assumption of maximal non-microstates free entropy dimension, we prove that there are no zero divisors in the sense that the product of any non-commutative polynomial in the generators with any element from the von Neumann algebra is zero if and only if at least one of those factors is zero. 
In particular, this shows that in this case the distribution of any non-constant self-adjoint non-commutative polynomial in the generators does not have atoms.

Questions on the absence of atoms for polynomials in\linebreak non-commuting random variables (or for polynomials in random matrices) have been an open problem for quite a while. We solve this general problem by showing that maximality of free entropy dimension excludes atoms.
\end{abstract}

\maketitle

\section{Introduction}

In a groundbreaking series of papers \cite{Voi-Entropy-I, Voi-Entropy-II, Voi-Entropy-III, Voi-Entropy-IV, Voi-Entropy-V, Voi-Entropy-VI} (see also the survey \cite{Voi-Entropy-Surv}), Voiculescu transferred the notion of entropy and Fisher information to the world of\linebreak non-commutative probability theory.
Free entropy and free Fisher information are some of the core quantities in free probability theory, with
fundamental importance both for operator algebraic and random matrix questions. One of the most striking results which came out of this program is arguably the proof of the fact \cite{Voi-Entropy-III} that the free group factors do not possess Cartan subalgebras. This gave in particular the solution of the by then longstanding open question, whether every separable $\text{II}_1$-factor contains Cartan subalgebras. 
But despite such deep results and applications, still many of the basic analytic properties of free entropy and Fisher information are poorly understood.

Voiculescu gave actually two different approaches to entropy and Fisher information in the non-commutative setting. The first one is based on the notion of matricial microstates and defines free entropy $\chi$ first and then, based on this, the free Fisher information $\Phi$; the second approach is based on the notion of conjugate variables with respect to certain non-commutative derivatives and defines free Fisher information $\Phi^\ast$ first and then, based on this, free entropy $\chi^\ast$. We want to note that both constructions lead independently to objects $\chi$ and $\chi^*$ (as well as $\Phi$ and $\Phi^*$)  which are, in analogy with the classical theory, justifiably called entropy (and Fisher information), but it is still not known whether they coincide. For many questions the actual value of these quantities is not important, essential is whether they are finite or infinite. There exist also more refined quantities, so-called free entropy dimensions (again in various variations), which allow a further distinction of the case of infinite entropy. In particular, finiteness of free entropy or of free Fisher information implies that the microstates free entropy dimension $\delta^*$ takes on its maximal value.

In the classical case, finiteness of entropy or of Fisher information imply some regularity of the corresponding distribution of the variables; in particular, they have a density (with respect to Lebesgue measure). In the non-commutative situation, the notion of a density does not make any direct sense, but still it is believed that the existence of finite free entropy or finite free Fisher information (in any of the two approaches) should correspond to some regularity properties of the considered non-commutative distributions. Thus one expects many ''smooth" properties for random variables $X_1,\dots,X_n$ for which either one of the quantities $\chi(X_1,\dots,X_n)$, $\chi^*(X_1,\dots,X_n)$, $\Phi(X_1,\dots,X_n)$, or $\Phi^*(X_1,\dots,X_n)$ is finite. In particular, it is commonly expected that such a finiteness implies that
\begin{itemize}
\item
there cannot exist non-trivial algebraic relations between the considered random variables;
\item
and that such algebraic relations can also not hold locally on non-trivial Hilbert subspaces; more formally this
means that there are no zero divisors in the affiliated von Neumann algebra. 
\end{itemize}
Up to now there has been no proof of such general statements. We will show here such results. In the first preprint version of this paper this was done under the assumption of finite non-microstates free Fisher information $\Phi^*(X_1\dots,X_n)$. Inspired by this, Shlyakhtenko could prove in \cite{Shly2014}, by combining our ideas with his earlier work in \cite{Connes-Shlyakhtenko}, our results under the weaker assumption of maximal
non-microstates free entropy dimension $\delta^\ast(X_1,\dots,X_n)=n$.
In the light of this, we reexamined our original arguments and noticed that they can also be extended without much extra effort to this most general case.

Our work was originally inspired by the realization that in the usual approaches to conjugate variables one usually assumes that there exist no algebraic relations between the considered variables. Though this is not necessary for the definition of conjugate variables themselves, more advanced arguments (which rely on the existence of non-commutative derivative operators) only work in the absence of such algebraic relations. As alluded to above one actually expected that the existence of conjugate variables (and thus the finiteness of $\Phi^*$) implies the absence of such relations. But since this has not been shown up to now, there was a kind of an annoying gap in the theory. This gap will be closed in Section \ref{algebraic_relations}.

It turned out that our ideas for this could also be extended to the much deeper question whether relations could hold locally; instead of asking whether for a non-trivial polynomial $P$ we can have algebraically $P(X_1,\dots,X_n)=0$,
we weaken this to the question whether $P(X_1,\dots,X_n)$ could be zero on an affiliated Hilbert subspace; if $u$ denotes the projection onto this subspace, then this is the question whether we can have a zero divisor $u$
for $P(X_1,\dots,X_n)$, i.e., an element $u$ in the von Neumann algebra generated by $X_1,\dots,X_n$ such that $P(X_1,\dots,X_n)u=0$.
We will show that already the condition $\delta^\ast(X_1,\dots,X_n)=n$ for the non-microstates free entropy dimension excludes such zero divisors.

In particular, this result allows to conclude that the distribution of any non-trivial self-adjoint non-commutative polynomial in the generators does not have atoms, if the generators have full non-microstates free entropy dimension. Note that in a random matrix language this allows to conclude that the asymptotic eigenvalue distribution of polynomials in random matrices has, under the above assumption, no atomic parts. Questions on the absence of atoms for polynomials in non-commuting random variables (or for polynomials in random matrices) have been an open
problem for quite a while. Only recently there was some progress on this in such generality; in \cite{Shlyakthenko-Skoufranis},
Shlyakhtenko and Skoufranis showed that polynomials in free variables exhibit (under the assumption of no atoms for
each of the variables) no atoms; here we give a vast generalization of this, by showing that not freeness is the crucial issue but maximality of free entropy dimension.

\section{Existence of conjugate variables and absence of algebraic relations}\label{algebraic_relations}

Let $\C\langle Z_1,\dots,Z_n\rangle$ be the $\ast$-algebra of non-commutative polynomials in $n$ self-adjoint (formal) variables $Z_1,\dots,Z_n$. For $j=1,\dots,n$, we denote by $\partial_j$ the non-commutative derivative with respect to $Z_j$, i.e. $\partial_j$ is the unique derivation
$$\partial_j:\ \C\langle Z_1,\dots,Z_n\rangle \to \C\langle Z_1,\dots,Z_n\rangle \otimes \C\langle Z_1,\dots,Z_n\rangle$$
that satisfies $\partial_j Z_i = \delta_{i,j} 1 \otimes 1$ for $i=1,\dots,n$.

Recall that a linear map $\delta: \A \rightarrow \M$, which is defined on a complex unital algebra $\A$ and takes its values in a $\A$-bimodule $\M$, is called a derivation if it satisfies the Leibniz rule
$$\delta(a_1 a_2) = \delta(a_1) \cdot a_2 + a_1 \cdot \delta(a_2) \qquad\text{for all $a_1,a_2\in\A$}.$$

In particular, being a derivation means for $\partial_j$ that
\begin{equation}\label{derivation-def}
\partial_j (P_1 P_2) = (\partial_j P_1) (1 \otimes P_2) + (P_1 \otimes 1) (\partial_j P_2)
\end{equation}
holds for all $P_1,P_2\in \C\langle Z_1,\dots,Z_n\rangle$. According to the $\C\langle Z_1,\dots,Z_n\rangle$-bimodule structure of $\C\langle Z_1,\dots,Z_n\rangle \otimes \C\langle Z_1,\dots,Z_n\rangle$, we will often abbreviate \eqref{derivation-def} to
$$\partial_j (P_1 P_2) = (\partial_j P_1) P_2 + P_1 (\partial_j P_2).$$

More explicitly, $\partial_j$ acts on monomials $P$ as
$$\partial_j P = \sum_{P = P_1 Z_j P_2} P_1 \otimes P_2,$$
where the sum runs over all decompositions $P=P_1Z_jP_2$ of $P$ with monomials $P_1,P_2$.

We record here the easy but useful observation that each derivation $\delta: \C\langle Z_1,\dots,Z_n\rangle \rightarrow \M$ taking values in any $\C\langle Z_1,\dots,Z_n\rangle$-bimodule $\M$ is determined by its values $\delta(Z_1),\dots,\delta(Z_n)$, which can be expressed explicitly by
\begin{equation}\label{derivation-uniqueness}
\delta(P) = \sum^n_{j=1} \partial_j(P) \sharp \delta(Z_j) \qquad\text{for all $P\in\C\langle Z_1,\dots,Z_n\rangle$}.
\end{equation}
Here, we stipulate that, whenever $\M$ is an $\A$-bimodule, the symbol $\sharp$ denotes the mapping $\sharp: \A\otimes\A \times \M \to \M$, which is determined by bilinear extension of $(a_1 \otimes a_2)\sharp m := a_1 \cdot m \cdot a_2$.

Throughout the following, let $(M,\tau)$ be a tracial $W^\ast$-probability space, which means that $M$ is a von Neumann algebra and $\tau$ is a faithful normal tracial state on $M$. For selfadjoint $X_1,\dots,X_n\in M$ we 
denote by $\vN(X_1,\dots,X_n)\subset M$ the von Neumann subalgebra of $M$ which is generated by $X_1,\dots,X_n$ and by
$L^2(X_1,\dots,X_n,\tau)\subset L^2(M,\tau)$ the $L^2$-space which is generated by $X_1,\dots,X_n$ with respect to the inner product given by $\langle P,Q\rangle:=\tau(PQ^*)$.

In the following, we will denote by $\ev_X$, for a given $n$-tuple $X=(X_1,\dots,X_n)$ of self-adjoint elements of $M$, the homomorphism
$$\ev_X:\ \C\langle Z_1,\dots,Z_n\rangle \rightarrow \C\langle X_1,\dots,X_n\rangle \subset M$$
given by evaluation at $X=(X_1,\dots,X_n)$, i.e. the homomorphism $\ev_X$ is determined by $Z_i \mapsto X_i$.

For reasons of clarity, we put $P(X) := \ev_X(P)$ for any $P\in \C\langle Z_1,\dots,Z_n\rangle$ and $Q(X) := (\ev_X \otimes \ev_X)(Q)$ for any $Q\in\C\langle Z_1,\dots,Z_n\rangle^{\otimes 2}$.

\begin{definition}\label{conjugate_system}
Let $X_1,\dots,X_n\in M$ be self-adjoint elements. If there are elements $\xi_1,\dots,\xi_n\in L^2(M,\tau)$, such that
\begin{equation}\label{conjugate_relations}
(\tau\otimes\tau)((\partial_jP)(X_1,\dots,X_n)) = \tau(\xi_j P(X_1,\dots,X_n))
\end{equation}
is satisfied for each non-commutative polynomial $P\in\C\langle Z_1,\dots,Z_n\rangle$ and for $j=1,\dots,n$, then we say that $(\xi_1,\dots,\xi_n)$ \emph{satisfies the conjugate relations for $(X_1,\dots,X_n)$}.

If, in addition, $\xi_1,\dots,\xi_n$ belong to $L^2(X_1,\dots,X_n,\tau)$, we say that $(\xi_1,\dots,\xi_n)$ is the \emph{conjugate system for $(X_1,\dots,X_n)$}. 
\end{definition}

Like in the usual setting, we have the following.

\begin{remark}
Let $\pi$ be the orthogonal projection from $L^2(M,\tau)$ to $L^2(X_1,\dots,X_n,\tau)$. It is easy to see that if $(\xi_1,\dots,\xi_n)$ satisfies the conjugate relations for $(X_1,\dots,X_n)$, then $(\pi(\xi_1),\dots,\pi(\xi_n))$ satisfies the conjugate relations for $(X_1,\dots,X_n)$ as well, and is therefore a conjugate system for $(X_1,\dots,X_n)$.

It is an easy consequence of its defining property \eqref{conjugate_relations} that a conjugate system $(\xi_1,\dots,\xi_n)$ for $(X_1,\dots,X_n)$ is unique if it exists. 
\end{remark}

Note that our notion of conjugate relations and conjugate variables differs from the usual definition which was given by Voiculescu in \cite{Voi-Entropy-V}, roughly speaking, just by the placement of brackets. To be more precise, in \eqref{conjugate_relations}, we first apply the derivative $\partial_j$ to the given non-commutative polynomial $P$ before we apply the evaluation at $X=(X_1,\dots,X_n)$, instead of applying the evaluation first, which consequently makes it necessary to have in the second step a well-defined derivation on $\C\langle X_1,\dots,X_n\rangle$ corresponding to $\partial_j$.

From a more abstract point of view, this idea is in the same spirit as \cite[Lemma 3.2]{Shly2000} but only on a purely algebraic level. In fact, we used the surjective homomorphism $\ev_X: \C\langle Z_1,\dots,Z_n\rangle \rightarrow \C\langle X_1,\dots,X_n\rangle$ in order to pass from $(\C\langle X_1,\dots,X_n\rangle,\tau)$ to the non-commutative probability space $(\C\langle Z_1,\dots,Z_n\rangle,\tau_X)$, where $\tau_X := \tau\circ\ev_X$. Due to this lifting, the algebraic relations between the generators disappear whereas the relevant information about their joint distribution remains unchanged.

In this section, our aim is to show that the existence of a conjugate system guarantees that $X_1,\dots,X_n$ do not satisfy any algebraic relations. This will be the content of Theorem \ref{main1} below. Its proof proceeds in two steps, which are performed in the following two propositions.
 
\begin{proposition}\label{key-prop-1}
Let $(M,\tau)$ be a tracial $W^\ast$-probability space and let self-adjoint $X_1,\dots,X_n\in M$ be given. Assume that there are elements $\xi_1,\dots,\xi_n\in L^2(M,\tau)$, such that $(\xi_1,\dots,\xi_n)$ satisfies the conjugate relations \eqref{conjugate_relations} for $X=(X_1,\dots,X_n)$.

Then the following implication holds true for any non-commutative polynomial $P\in \C\langle Z_1,\dots,Z_n\rangle$:
\begin{equation}\label{reduction}
P(X) = 0 \quad\Longrightarrow\quad \forall j=1,\dots,n:\ (\partial_j P)(X) = 0
\end{equation}
\end{proposition}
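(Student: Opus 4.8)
The plan is to leverage the observation that, once $P(X)=0$, we also have $(QPR)(X)=Q(X)P(X)R(X)=0$ for all $Q,R\in\C\langle Z_1,\dots,Z_n\rangle$. Feeding these ``degenerate'' polynomials into the conjugate relations \eqref{conjugate_relations} and then applying the Leibniz rule will leave exactly one copy of $(\partial_j P)(X)$ sandwiched between the factors $Q(X)$ and $R(X)$; letting $Q$ and $R$ vary over all polynomials then forces that copy to vanish.

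To carry this out, fix $j\in\{1,\dots,n\}$ and assume $P(X)=0$. For arbitrary $Q,R\in\C\langle Z_1,\dots,Z_n\rangle$, the conjugate relation \eqref{conjugate_relations} applied to the polynomial $QPR$ gives
$$(\tau\otimes\tau)\big((\partial_j(QPR))(X)\big)=\tau\big(\xi_j\,(QPR)(X)\big)=0,$$
since $(QPR)(X)=0$ (note $\xi_j\in L^2(M,\tau)$ pairs harmlessly with the bounded element $(QPR)(X)$). Expanding $\partial_j(QPR)$ by the Leibniz rule \eqref{derivation-def}, used twice, yields in the abbreviated bimodule notation
$$\partial_j(QPR)=(\partial_j Q)\cdot PR+Q\cdot(\partial_j P)\cdot R+QP\cdot(\partial_j R).$$
Applying $\ev_X\otimes\ev_X$ and using $P(X)=0$, the first and third summands vanish, each carrying a factor $P(X)$ inside one leg of the tensor, so only the middle term survives and we obtain
$$(\tau\otimes\tau)\big[(Q(X)\otimes 1)\,(\partial_j P)(X)\,(1\otimes R(X))\big]=0\qquad\text{for all }Q,R\in\C\langle Z_1,\dots,Z_n\rangle.$$

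Finally, write $(\partial_j P)(X)=\sum_k A_k\otimes B_k$ with $A_k,B_k\in\C\langle X_1,\dots,X_n\rangle$; the last display reads $\sum_k\tau(Q(X)A_k)\,\tau(B_k R(X))=0$. Since $|\tau(aA_k)|\le\|a\|_2\|A_k\|_2$ and similarly for the $B_k$, the bilinear form $(\eta,\zeta)\mapsto\sum_k\tau(\eta A_k)\,\tau(B_k\zeta)$ is $\|\cdot\|_2$-continuous on $L^2(X_1,\dots,X_n,\tau)\times L^2(X_1,\dots,X_n,\tau)$, and it vanishes on the $\|\cdot\|_2$-dense set of pairs of polynomials in $X_1,\dots,X_n$, hence identically. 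As $\sum_k\tau(\eta A_k)\,\tau(B_k\zeta)=\langle(\partial_j P)(X),\eta^\ast\otimes\zeta^\ast\rangle$ in $L^2(X_1,\dots,X_n,\tau)\otimes L^2(X_1,\dots,X_n,\tau)$, and $\eta\mapsto\eta^\ast$ is a bijection of $L^2(X_1,\dots,X_n,\tau)$, the vector $(\partial_j P)(X)$ is orthogonal to every elementary tensor and hence to a dense subspace; therefore $(\partial_j P)(X)=0$ in $L^2(X_1,\dots,X_n,\tau)\otimes L^2(X_1,\dots,X_n,\tau)$, and, since $\tau$ is faithful, also as an element of $\C\langle X_1,\dots,X_n\rangle\otimes\C\langle X_1,\dots,X_n\rangle$. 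The Leibniz bookkeeping is routine; the only point needing care is this last limiting step, where one must genuinely verify the $\|\cdot\|_2$-continuity of the bilinear form and the density of polynomials in $L^2(X_1,\dots,X_n,\tau)$ in both slots in order to pass from vanishing on polynomial test vectors to vanishing of the tensor $(\partial_j P)(X)$ itself.
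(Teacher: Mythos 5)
Your argument is correct and follows essentially the same strategy as the paper: apply the conjugate relation to polynomials of the form $QPR$, expand via the Leibniz rule, and use $P(X)=0$ to isolate the middle term $(\tau\otimes\tau)\bigl[(Q(X)\otimes 1)(\partial_j P)(X)(1\otimes R(X))\bigr]=0$. The only (minor) divergence is the final step: the paper concludes directly by choosing the single test element $Q=(\partial_j P)^\ast\in\C\langle Z_1,\dots,Z_n\rangle^{\otimes 2}$ and invoking faithfulness of $\tau\otimes\tau$, which sidesteps the $\|\cdot\|_2$-continuity and density-of-polynomials considerations you invoke; both routes are valid, but the paper's substitution is purely algebraic and avoids any limiting argument.
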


Before beginning with the proof, let us introduce a binary operation $\sharp$ on the algebraic tensor product $M \otimes M$ by bilinear extension of
$$(a_1 \otimes a_2) \sharp (b_1 \otimes b_2) := (a_1 b_1) \otimes (b_2 a_2).$$
Note that, since $M \otimes M$ is naturally a $M$-bimodule, this corresponds exactly to our earlier definition of $\sharp$ and will therefore not lead to any confusion.

\begin{proof}[Proof of Proposition \ref{key-prop-1}]
Assume that $P\in \C\langle Z_1,\dots,Z_n\rangle$ satisfies $P(X)=0$ for $X=(X_1,\dots,X_n)$ and choose any $j=1,\dots,n$. If we take arbitrary $P_1,P_2\in \C\langle Z_1,\dots,Z_n\rangle$, we have by iterating the product rule \eqref{derivation-def} twice that
$$\partial_j (P_1 P P_2) = (\partial_j P_1) P P_2 + P_1 P (\partial_j P_2) + P_1 (\partial_j P) P_2$$
and therefore, by evaluating this identity at $X$ and applying $\tau\otimes\tau$ subsequently,
$$(\tau \otimes \tau)\big((\partial_j (P_1 P P_2))(X)\big) = (\tau \otimes \tau)(P_1(X) (\partial_j P)(X) P_2(X)).$$
Furthermore, according to \eqref{conjugate_relations}, we may deduce that
$$(\tau \otimes \tau)\big((\partial_j (P_1 P P_2))(X)\big) = \tau(\xi_j (P_1 P P_2)(X)) = 0.$$
Thus, we observe that
$$(\tau \otimes \tau)((P_1 \otimes P_2)(X) \sharp (\partial_j P)(X)) = (\tau \otimes \tau)(P_1(X) (\partial_j P)(X) P_2(X)) = 0$$
for all $P_1,P_2 \in \C\langle Z_1,\dots,Z_n\rangle$ and hence by linearity
$$(\tau \otimes \tau)(Q(X) \sharp (\partial_j P)(X)) = 0$$
for all $Q\in \C\langle Z_1,\dots,Z_n\rangle^{\otimes 2}$. If we apply this observation to $Q = (\partial_j P)^\ast$, the faithfulness of $\tau\otimes\tau$ (recall that $\tau$ was assumed to be faithful) implies $(\partial_j P)(X) = 0$, as claimed.
\end{proof}

The second proposition shows now that the validity of \eqref{reduction} is already sufficient to exclude algebraic relations.

\begin{proposition}\label{key-prop-2}
Let $(M,\tau)$ be a tracial $W^\ast$-probability space and let self-adjoint $X_1,\dots,X_n\in M$ be given, such that \eqref{reduction} holds.

Then the non-commutative random variables $X_1,\dots,X_n$ do not satisfy any non-trivial algebraic relation, i.e., if $P(X_1,\dots,X_n) = 0$ holds for any non-commutative polynomial $P \in \C\langle Z_1,\dots,Z_n\rangle$, then we must have $P=0$.
\end{proposition}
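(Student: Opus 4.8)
The plan is to argue by contradiction: suppose $P \in \C\langle Z_1,\dots,Z_n\rangle$ is a non-zero polynomial with $P(X) = 0$, and among all such non-zero polynomials choose one of minimal degree $d$. Since constants evaluate faithfully (a non-zero scalar multiple of $1$ is non-zero in $M$), we have $d \geq 1$. The key observation is that \eqref{reduction} applies to $P$, so $(\partial_j P)(X) = 0$ for every $j = 1,\dots,n$. Now each $\partial_j P$ lies in $\C\langle Z_1,\dots,Z_n\rangle^{\otimes 2}$, and by minimality of $d$ there can be no non-trivial ``one-sided'' vanishing; the point I want to exploit is that $\partial_j$ strictly lowers degree, so $\partial_j P$ has degree at most $d - 1$ in each leg.

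The main step is to show that $\partial_j P = 0$ for all $j$ forces $P$ to be constant. This is a purely algebraic fact about the free difference quotients: if $P$ is a non-commutative polynomial all of whose partial derivatives $\partial_j P$ vanish (as elements of the algebraic tensor product), then $P \in \C 1$. One clean way is to note that the full derivation $\delta := \sum_j$ does not quite suffice, so instead I would argue directly on monomials: writing $P = \sum_{w} c_w w$ as a linear combination of distinct monomials $w$, and using the explicit formula $\partial_j w = \sum_{w = w_1 Z_j w_2} w_1 \otimes w_2$, one checks that the simple tensors $w_1 \otimes w_2$ appearing across all these sums (for a fixed $j$, and ranging over the monomials $w$ of maximal degree in $P$) cannot cancel — for instance, by reading off the pair $(1 \otimes w')$ where $Z_j w'$ is a monomial of top degree beginning with $Z_j$, or more robustly by applying the counit-type map $m \mapsto m$ on one leg to recover information. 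Concretely: if $w = Z_i u$ has top degree, then $1 \otimes u$ occurs in $\partial_i P$ with coefficient exactly $\sum_{w' : Z_i w' \text{ a monomial of } P} c_{Z_i w'}$ restricted appropriately; a cleaner route is to apply $(\id \otimes \tau_0)$ where... — actually the simplest is: the map $P \mapsto (\partial_1 P, \dots, \partial_n P)$ has kernel exactly $\C 1$ because one can reconstruct $P$ up to a constant via $P - \tau_{\mathrm{triv}}(P) \cdot 1 = \sum_j \partial_j(P) \sharp Z_j$ evaluated in the polynomial algebra itself (take $\M = \C\langle Z_1,\dots,Z_n\rangle$ with the standard bimodule structure and $\delta = $ the derivation with $\delta(Z_j) = Z_j$, so $\delta(P) = \sum_j \partial_j(P) \sharp Z_j$; this $\delta$ is the grading derivation, acting on a degree-$k$ monomial by multiplication by $k$, hence $\delta(P) = 0$ iff $P$ is constant, and $\delta(P) = 0$ follows from $\partial_j P = 0$ for all $j$ by \eqref{derivation-uniqueness}).

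So the argument is: from $P(X) = 0$ and \eqref{reduction}, get $(\partial_j P)(X) = 0$ for all $j$; but this is not yet enough to conclude $\partial_j P = 0$ as polynomials — that is the real obstacle, since $\ev_X$ need not be injective on the tensor product. The resolution is to iterate: I would set up a descending induction on degree. Suppose $P$ has minimal positive degree among non-zero kernel elements. Its partial derivatives $\partial_j P$ have degree $< d$ in each variable. Applying $\ev_X \otimes \ev_X$ kills them, but I cannot immediately conclude they are zero. Instead, fix $j$ and apply $(\id \otimes \ev_X)$ or pair against arbitrary $\C\langle X\rangle \otimes \C\langle X\rangle$ — hmm, this still needs care. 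The honest fix, which is surely what the authors do, is: apply \eqref{reduction} repeatedly to get $(\partial_{j_1} \cdots \partial_{j_k} P)(X) = 0$ for all multi-indices, where these iterated derivatives land in $\C\langle Z\rangle^{\otimes (k+1)}$; taking $k = d$ makes all surviving terms of the form $1 \otimes 1 \otimes \cdots \otimes 1$ times a scalar, i.e. the iterated derivative of the top-degree part is a constant tensor, which evaluates injectively, forcing the top-degree part of $P$ to vanish — contradicting that $d$ was the degree of $P$.

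Thus the key steps in order are: (1) reduce to a minimal-degree non-zero $P$ in the kernel; (2) apply Proposition \ref{key-prop-1} iteratively to the successive free difference quotients; (3) observe that after $d$ differentiations the leading part becomes a scalar multiple of $1 \otimes \cdots \otimes 1$, on which the evaluation map is injective, so the leading coefficient data must vanish; (4) conclude a contradiction unless $P$ is constant, and handle the constant case directly via faithfulness of $\tau$. I expect step (2)–(3), namely bookkeeping the iterated derivatives and pinning down exactly which constant tensor survives to detect a given monomial of top degree, to be the main technical obstacle, though it is ultimately elementary combinatorics of words.
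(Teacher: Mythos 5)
You have the right skeleton — iterate the reduction to extract the leading coefficient of a putative non-trivial kernel element, then use that scalars evaluate faithfully — and you correctly pinpoint the obstruction: $\ev_X\otimes\ev_X$ is not a priori injective on $\C\langle Z_1,\dots,Z_n\rangle^{\otimes 2}$, so $(\partial_j P)(X) = 0$ does not give $\partial_j P = 0$ in the tensor square. But your proposed fix — ``apply \eqref{reduction} repeatedly to get $(\partial_{j_1}\cdots\partial_{j_k} P)(X) = 0$'' with the iterated derivatives landing in $\C\langle Z_1,\dots,Z_n\rangle^{\otimes(k+1)}$ — has a genuine gap. The implication \eqref{reduction} is a statement about elements of $\C\langle Z_1,\dots,Z_n\rangle$, not about elements of its tensor powers. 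From $\sum_a A_a(X) \otimes B_a(X) = 0$ in $M\otimes M$ you cannot apply \eqref{reduction} to the $A_a$'s or the $B_a$'s separately (they need not individually vanish at $X$), and nothing in \eqref{reduction} lets you differentiate an identity that holds only in the tensor square and conclude that the differentiated identity still holds after evaluation. So ``apply \eqref{reduction} repeatedly'' is not a valid move as stated, and that is precisely the step where the work is hiding.

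The paper resolves this by collapsing back to $\C\langle Z_1,\dots,Z_n\rangle$ before re-differentiating. Set $\Delta_j := ((\tau\circ\ev_X)\otimes\id)\circ\partial_j$, a linear map from $\C\langle Z_1,\dots,Z_n\rangle$ to itself. Writing $\partial_j P = \sum_a A_a\otimes B_a$, one checks $(\Delta_j P)(X) = \sum_a \tau(A_a(X))\,B_a(X) = (\tau\otimes\id)\bigl((\partial_j P)(X)\bigr)$, so $(\partial_j P)(X) = 0$ forces $(\Delta_j P)(X) = 0$. Since $\Delta_j P$ is again a genuine polynomial, \eqref{reduction} can now be applied to it, and the iteration is legitimate. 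Choosing a top-degree monomial $Z_{i_1}\cdots Z_{i_d}$ of $P$ with coefficient $a_{i_1,\dots,i_d}$, a degree count shows $\Delta_{i_d}\cdots\Delta_{i_1}$ annihilates every other monomial of $P$ and sends $Z_{i_1}\cdots Z_{i_d}$ to $1$, so $\Delta_{i_d}\cdots\Delta_{i_1} P = a_{i_1,\dots,i_d}$, whence $a_{i_1,\dots,i_d} = 0$ and $P$ is constant, hence zero by faithfulness. If you want to keep your iterated-tensor picture, this is exactly equivalent: after each application of \eqref{reduction}, first apply $(\tau\circ\ev_X)\otimes\id$ to one leg to land back in the polynomial algebra before differentiating again — that is the missing ingredient in your step (2).
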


\begin{proof}
For $j=1,\dots,n$, we may define $\Delta_j := ((\tau\circ\ev_X)\otimes\id) \circ \partial_j$, which gives a linear mapping
$$\Delta_j:\ \C\langle Z_1,\dots,Z_n\rangle \rightarrow \C\langle Z_1,\dots,Z_n\rangle.$$
The assumption \eqref{reduction} yields then that for any $P\in\C\langle Z_1,\dots,Z_n\rangle$ the implication
\begin{equation}\label{reduction-reduced}
P(X) = 0 \quad\Longrightarrow\quad \forall j=1,\dots,n:\ (\Delta_j P)(X) = 0.
\end{equation}
holds true with $X=(X_1,\dots,X_n)$.

Assume now that there is any non-zero polynomial $P\in\C\langle Z_1,\dots,Z_n\rangle$ such that $P(X_1,\dots,X_n)=0$ holds, for instance
$$P(Z_1,\dots,Z_n) = a_0 + \sum_{k=1}^d \sum_{i_1,\dots,i_k = 1}^n a_{i_1,\dots,i_k} Z_{i_1} \dots Z_{i_k},$$
where $d\geq1$ denotes the total degree of $P$. We choose any summand of highest degree
$$a_{i_1,\dots,i_d} Z_{i_1} \dots Z_{i_d}$$
of $P$. Since $\Delta_{i_d} \dots \Delta_{i_1}$ is clearly zero on constants, any monomial of degree strictly less than $d$, and furthermore on any monomial of degree $d$, where the variables do not appear in the prescribed order, we see that $\Delta_{i_d} \dots \Delta_{i_1} P = a_{i_1,\dots,i_d}$. Hence, we deduce by iterating \eqref{reduction-reduced}
$$a_{i_1,\dots,i_n} = (\Delta_{i_d} \dots \Delta_{i_1} P)(X) = 0,$$
which finally leads to a contradiction. Therefore, we must have $P=0$, which concludes the proof.
\end{proof}

Combining the above Proposition \ref{key-prop-1} with Proposition \ref{key-prop-2} leads us now directly to the following theorem.

\begin{theorem}\label{main1}
As before, let $(M,\tau)$ be a tracial $W^\ast$-probability space. Let $X_1,\dots,X_n\in M$ be self-adjoint and assume that there are elements $\xi_1,\dots,\xi_n\in L^2(M,\tau)$, such that $(\xi_1,\dots,\xi_n)$ satisfies the conjugate relations for $(X_1,\dots,X_n)$, i.e. \eqref{conjugate_relations} holds for $j=1,\dots,n$. Then we have the following statements:
\begin{itemize}
 \item[(a)] $X_1,\dots,X_n$ do not satisfy any non-trivial algebraic relation, i.e. there exists no non-zero polynomial $P\in\C\langle Z_1,\dots,Z_n\rangle$ such that $P(X_1,\dots,X_n)=0$.
 \item[(b)] For $j=1,\dots,n$, there is a unique derivation
$$\hat{\partial}_j:\ \C\langle X_1,\dots,X_n\rangle \rightarrow \C\langle X_1,\dots,X_n\rangle \otimes \C\langle X_1,\dots,X_n\rangle$$
which satisfies $\hat{\partial}_j(X_i) = \delta_{j,i} 1 \otimes 1$ for $i=1,\dots,n$.
\end{itemize}
\end{theorem}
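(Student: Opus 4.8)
The plan is to deduce both statements directly from the two preceding propositions, together with the elementary bookkeeping about derivations recorded in \eqref{derivation-uniqueness}; essentially all the substance sits in Propositions \ref{key-prop-1} and \ref{key-prop-2}.

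For part (a), I would observe that the standing hypothesis, namely that $(\xi_1,\dots,\xi_n)$ satisfies the conjugate relations \eqref{conjugate_relations} for $(X_1,\dots,X_n)$, is exactly the hypothesis of Proposition \ref{key-prop-1}. Hence the implication \eqref{reduction} holds for every $P\in\C\langle Z_1,\dots,Z_n\rangle$, which places us in the situation of Proposition \ref{key-prop-2}. Its conclusion is precisely statement (a): no non-zero polynomial $P$ satisfies $P(X_1,\dots,X_n)=0$. Nothing further is needed here.

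For part (b), the key point is that (a) makes the evaluation homomorphism $\ev_X\colon \C\langle Z_1,\dots,Z_n\rangle \to \C\langle X_1,\dots,X_n\rangle$ injective, hence a $\ast$-algebra isomorphism (it is surjective by construction and intertwines the involutions, since the $X_i$ are self-adjoint and the $Z_i$ are formally self-adjoint). One then transports $\partial_j$ along this isomorphism by setting
$$\hat\partial_j := (\ev_X\otimes\ev_X)\circ\partial_j\circ\ev_X^{-1}.$$
That $\hat\partial_j$ is a derivation is routine: writing $P_i := \ev_X^{-1}(Q_i)$ for $Q_1,Q_2\in\C\langle X_1,\dots,X_n\rangle$, one applies the Leibniz rule \eqref{derivation-def} for $\partial_j$ to $P_1P_2$ and pushes the result forward through $\ev_X\otimes\ev_X$, using that this map respects the bimodule actions, so that $(\partial_j P_1)(1\otimes P_2)$ is carried to $(\hat\partial_j Q_1)(1\otimes Q_2)$ and similarly for the other term. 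Evaluating on generators gives $\hat\partial_j(X_i) = (\ev_X\otimes\ev_X)(\partial_j Z_i) = (\ev_X\otimes\ev_X)(\delta_{j,i}\, 1\otimes 1) = \delta_{j,i}\, 1\otimes 1$, as required. Uniqueness is the analogue of \eqref{derivation-uniqueness} on $\C\langle X_1,\dots,X_n\rangle$: this algebra is generated as a unital algebra by $X_1,\dots,X_n$, so by linearity and iteration of the Leibniz rule any derivation out of it is determined by its values on these generators; hence two derivations both sending $X_i$ to $\delta_{j,i}\, 1\otimes 1$ coincide. (Alternatively, one transports \eqref{derivation-uniqueness} itself through the isomorphism $\ev_X$.)

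I do not expect a genuine obstacle here. The only conceptual subtlety in (b) is that it truly relies on (a): a priori an algebraic relation $P(X)=0$ with $(\partial_j P)(X)\neq 0$ would obstruct even the well-definedness of $\hat\partial_j$ on $\C\langle X_1,\dots,X_n\rangle$, and the implication \eqref{reduction} (equivalently Proposition \ref{key-prop-1}) is precisely the statement that this obstruction does not occur. This is why it is natural that one and the same hypothesis yields both (a) and (b).
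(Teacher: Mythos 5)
Your proposal is correct and follows essentially the same route as the paper: part (a) by chaining Propositions \ref{key-prop-1} and \ref{key-prop-2}, and part (b) by transporting $\partial_j$ along the isomorphism $\ev_X$ established by (a), with uniqueness from \eqref{derivation-uniqueness}. No discrepancies.
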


Note that part (b) is an immediate consequence of part (a): Since (a) tells us that the evaluation homomorphism $\ev_X$ is in fact an isomorphism, we can immediately define a non-commutative derivation
$$\hat{\partial}_j:\ \C\langle X_1,\dots,X_n\rangle \rightarrow \C\langle X_1,\dots,X_n\rangle \otimes \C\langle X_1,\dots,X_n\rangle,$$
where the terminology derivation has to be understood with respect to the $\C\langle X_1,\dots,X_n\rangle$-bimodule structure of $\C\langle X_1,\dots,X_n\rangle^{\otimes 2}$. The uniqueness can be deduced from \eqref{derivation-uniqueness}; see also Proposition \ref{derivatives_relations} in the appendix.

Following \cite{Voi-Entropy-V}, we may proceed now by defining (non-microstates) free Fisher information.

\begin{definition}
Let $(M,\tau)$ be a tracial $W^\ast$-probability space and let self-adjoint elements $X_1,\dots,X_n \in M$ be given. We define their \emph{(non-microstates) free Fisher information $\Phi^\ast(X_1,\dots,X_n)$} by
$$\Phi^\ast(X_1,\dots,X_n) := \sum^n_{j=1} \|\xi_j\|_2^2$$
if a conjugate system $(\xi_1,\dots,\xi_n)$ for $(X_1,\dots,X_n)$ in the sense of Definition \ref{conjugate_system} exists, and we put $\Phi^\ast(X_1,\dots,X_n) := \infty$ if no such conjugate system for $(X_1,\dots,X_n)$ exists.
\end{definition}

This $\Phi^\ast(X_1,\dots,X_n)$ is just the usual non-microstates free Fisher information as defined in \cite{Voi-Entropy-V}. However, we have now the advantage that it can be defined even without assuming the algebraic freeness of $X_1,\dots,X_n$ right from the beginning. Actually, our result can now be stated as follows: $\Phi^*(X_1,\dots,X_n)<\infty$ implies the absence of algebraic relations between $X_1,\dots,X_n$.

Let $(M,\tau)$ be a $W^\ast$-probability space and let self-adjoint elements $X_1,\dots,X_n\in M$ be given such that the condition $\Phi^\ast(X_1,\dots,X_n) < \infty$ is fulfilled. Part (a) of Theorem \ref{main1} tells us then that $X_1,\dots,X_n$ do not satisfy any algebraic relation, which in other words means that the evaluation homomorphism $\ev_X$ induces an isomorphism between the abstract polynomial algebra $\C\langle Z_1,\dots, Z_n\rangle$ and the subalgebra $\C\langle X_1,\dots,X_n\rangle$ of $M$. Thus, each of the non-commutative derivatives
$$\hat{\partial}_j:\ \C\langle X_1,\dots,X_n\rangle \rightarrow \C\langle X_1,\dots,X_n\rangle \otimes \C\langle X_1,\dots,X_n\rangle,$$
whose existence is claimed in part (b) of Theorem \ref{main1}, is naturally induced under this identification by the corresponding non-commutative derivative $\partial_j$ on $\C\langle Z_1,\dots,Z_n\rangle$. Due to this strong relationship, we do not have to distinguish anymore between $\partial_j$ and $\hat{\partial}_j$.

We finish this section by noting that $\Phi^\ast(X_1,\dots,X_n)<\infty$ moreover excludes analytic relations. More precisely, this means that there is no non-zero non-commutative power series $P$, which is convergent on a polydisc
$$D_R:=\{(Y_1,\dots,Y_n)\in M^n|\ \forall j=1,\dots,n:\ \|Y_j\| < R\}$$
for some $R>0$, such that $(X_1,\dots,X_n)\in D_R$ and $P(X_1,\dots,X_n)=0$. Based on Voiculescu's original definition of the non-microstates free Fisher information and hence under the additional assumption that $X_1,\dots,X_n$ are algebraically free, this was shown by Dabrowski in \cite[Lemma 37]{Dab-free_stochastic_PDE}.

\section{Non-microstates free entropy dimension and zero divisors}

Inspired by the methods used in the proof of Theorem \ref{main1}, we address now the more general question of existence of zero divisors under the assumption of full non-microstates free entropy dimension.

First of all, we shall make more precise what we mean by this. We postpone the definition of the non-microstates free entropy dimension $\delta^\ast(X_1,\dots,X_n)$ and related quantities to Subsection \ref{subsec:definitions}, but we state here the result that we aim to prove.

\begin{theorem}\label{main2}
Let $(M,\tau)$ be a tracial $W^\ast$-probability space. Furthermore, let $X_1,\dots,X_n \in M$ be self-adjoint elements and assume that $\delta^\ast(X_1,\dots,X_n) = n$ holds.

Then, for any non-zero non-commutative polynomial $P$, there exists no non-zero element $w\in\vN(X_1,\dots,X_n)$ such that
$$P(X_1,\dots,X_n) w = 0.$$
\end{theorem}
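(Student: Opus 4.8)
The plan is to reproduce the architecture of Theorem~\ref{main1} — a ``reduction step'' turning a vanishing relation into vanishing of derivatives, followed by the leading-coefficient argument of Proposition~\ref{key-prop-2} — with the crucial difference that the reduction step is now carried out \emph{locally}, i.e.\ after multiplying by the projection onto the relevant Hilbert subspace. This localization is the only place where the hypothesis $\delta^\ast(X_1,\dots,X_n)=n$ is needed.

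\emph{Step 1: reduction to a projection.} Suppose, for contradiction, that $P\neq 0$ while $P(X)w=0$ for some $0\neq w\in\vN(X_1,\dots,X_n)$. Viewing $P(X)$ as left multiplication on $L^2(X_1,\dots,X_n,\tau)$, its kernel is a closed subspace invariant under right multiplication by $\vN(X_1,\dots,X_n)$, hence of the form $q\cdot L^2(X_1,\dots,X_n,\tau)$ with $q\in\vN(X_1,\dots,X_n)$ a projection; faithfulness of $\tau$ and $w\neq0$ force $q\neq0$, and $P(X)q=0$. Replacing $P$ by $P^\ast P$ — still non-zero, since $\C\langle Z_1,\dots,Z_n\rangle$ is a domain, and still annihilated on the right by $q$ — we may assume $P=P^\ast$, so that also $qP(X)=0$. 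It thus suffices to rule out the existence of a non-zero projection $q\in\vN(X_1,\dots,X_n)$ with $P(X)q=0$.

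\emph{Step 2: the localized reduction step.} The key assertion, the analogue of Proposition~\ref{key-prop-1}, is that under $\delta^\ast(X_1,\dots,X_n)=n$,
\[
P(X)\,q=0 \quad\Longrightarrow\quad \forall\, j=1,\dots,n:\ (\partial_jP)(X)\,(1\otimes q)=0 .
\]
Heuristically this is forced: differentiating $P(X)q=0$ gives $(\partial_jP)(X)(1\otimes q)=-(P(X)\otimes1)\,\partial_jq$, and differentiating $q=q^2$ gives $(P(X)\otimes1)\partial_jq=(P(X)\otimes1)(\partial_jq)(1\otimes q)+(P(X)q\otimes1)(\partial_jq)=(P(X)\otimes1)(\partial_jq)(1\otimes q)$, which — together with $qP(X)=0$ and the integration-by-parts formula for $\partial_j^\ast$ — one expects to be pushed to $0$. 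The honest proof mimics Proposition~\ref{key-prop-1}: for polynomials $R_1,R_2$ one differentiates a word $R_1Pq_mR_2$, where $q_m\in\C\langle X\rangle$ with $q_m\to q$ in $\|\cdot\|_2$, and applies $\tau\otimes\tau$; every term in which $P$ survives as an undifferentiated block sits immediately to the left of $q_m$, hence tends to $0$ because $P(X)q=0$; the term differentiating $P$ reproduces, in the limit, the pairing of $(\partial_jP)(X)(1\otimes q)$ against $R_1^\ast\otimes R_2^\ast$; and the sole obstruction is the term differentiating $q_m$, since $(\partial_jq_m)(X)$ need not converge. Taming this last term is where $\delta^\ast(X_1,\dots,X_n)=n$ enters: one deforms by free semicirculars, $X_i^{(t)}:=X_i+\sqrt t\,S_i$, which has finite free Fisher information and hence a conjugate system $(\xi_1^{(t)},\dots,\xi_n^{(t)})$ (so that Theorem~\ref{main1} and the conjugate relations \eqref{conjugate_relations} apply to $X^{(t)}$), and the hypothesis says precisely that $\sum_j\|\sqrt t\,\xi_j^{(t)}\|_2^2=t\,\Phi^\ast(X^{(t)})\to0$ along a suitable sequence $t_k\searrow0$. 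Running the computation with $X^{(t)}$ — where now only $\|P(X^{(t)})q\|_2\to0$ is available — the offending term gets expressed, via the conjugate relations for $X^{(t)}$, as a pairing against $\xi_j^{(t)}$ of a quantity that is $o(1)$ relative to $\|\xi_j^{(t)}\|_2^{-1}$ (this is where the smallness of $t\,\Phi^\ast(X^{(t)})$ and a careful coupling of $t_k\to0$ with $m=m(t_k)\to\infty$ are used), so that it vanishes in the limit; $\|\cdot\|_2$-continuity of polynomial evaluation then yields the implication. This deformation/limit estimate is the heart of the matter and the main obstacle; it is, in essence, the argument Shlyakhtenko extracted from Connes--Shlyakhtenko.

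\emph{Step 3: iteration.} Granted Step~2, set $\Delta_j:=((\tau\circ\ev_X)\otimes\id)\circ\partial_j$ as in Proposition~\ref{key-prop-2}. Applying $\tau\otimes\id$ to the conclusion of Step~2 gives, for every polynomial $P$ with $P(X)q=0$ and every $j$, that $(\Delta_jP)(X)\,q=0$. Being again an implication of the same form, this can be iterated exactly as in Proposition~\ref{key-prop-2}: picking a summand $a_{i_1,\dots,i_d}Z_{i_1}\cdots Z_{i_d}$ of highest degree $d$ in $P$ and recalling $\Delta_{i_d}\cdots\Delta_{i_1}P=a_{i_1,\dots,i_d}$ (a non-zero scalar), we obtain $a_{i_1,\dots,i_d}\,q=(\Delta_{i_d}\cdots\Delta_{i_1}P)(X)\,q=0$, whence $q=0$, contradicting $q\neq0$. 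Therefore no non-zero $w$ with $P(X)w=0$ exists, which is the theorem. (For non-constant self-adjoint $P$, applying this to a spectral projection $w$ of $P(X_1,\dots,X_n)$ shows that the distribution of $P(X_1,\dots,X_n)$ has no atoms.)
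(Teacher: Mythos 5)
Your overall architecture (a differentiation--reduction step followed by a leading-coefficient argument) is sound, and the normalization $P\mapsto P^\ast P$, $w\mapsto q$ is legitimate, but the implication asserted in Step~2 is strictly too strong and, as stated, unprovable. From $P(X)q=0$ and $qP(X)=0$ one can only deduce the \emph{two-sided} annihilation $(q\otimes 1)(\partial_jP)(X)(1\otimes q)=0$, not $(\partial_jP)(X)(1\otimes q)=0$. Indeed your own heuristic, when finished, gives $(\partial_j P)(1\otimes q)=-(P\otimes 1)(\partial_j q)(1\otimes q)$, and it is only after left-multiplying by $q\otimes 1$ and using $qP=0$ that the right-hand side dies; this is exactly the shape of the paper's Corollary~\ref{reduction-strong}, where $Pu=0$ and $P^\ast v=0$ together yield $v^\ast(\partial_iP)(X)u=0$. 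The one-sided version is simply false in general: take $P=Z_1$ and $q$ a non-zero kernel projection of $X_1$; then $(\partial_1 P)(X)(1\otimes q)=1\otimes q\neq 0$. It holds only vacuously under $\delta^\ast=n$, which makes building the proof on it circular.

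This forces two changes in Step~3. First, $\Delta_j$ must carry a projection, $\Delta_{p,j}P:=(\tau\otimes\id)\bigl((p\otimes 1)(\ev_X\otimes\id)(\partial_jP)\bigr)$, so that $(\Delta_{p,j}P)(X)w=0$ follows from $(p\otimes 1)(\partial_jP)(X)(1\otimes w)=0$. Second, and more importantly, $\Delta_{p,j}P$ is no longer self-adjoint even when $P$ is, so the same projection does not annihilate it on the left; a \emph{fresh} projection is needed at each of the $d$ iterations. This is exactly where traciality enters, via the polar-decomposition fact (Lemma~\ref{key-lemma}) that $\ker(\tilde P(X))\neq 0$ forces $\ker(\tilde P(X)^\ast)\neq 0$; the paper then produces $p_1,\dots,p_d$ and computes $\Delta_{p_d,i_d}\cdots\Delta_{p_1,i_1}P=\tau(p_d)\cdots\tau(p_1)\,a_{i_1,\dots,i_d}$, a non-zero scalar, to close the contradiction. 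Finally, the deformation estimate you rightly call ``the heart of the matter'' is acknowledged but not executed; the paper's route is the quantitative bound of Proposition~\ref{Fisher-bound} under finite Fisher information, pushed through semicircular perturbations in Proposition~\ref{Fisher-growth} to yield a factor $\alpha(X)=n-\hat\delta^\star(X)$ that vanishes under the hypothesis.
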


Recall that to each element $X=X^\ast\in M$ corresponds a unique probability measure $\mu_X$ on the real line $\R$, which has the same moments as $X$, i.e. it satisfies 
$$\tau(X^k) = \int_\R t^k\, d\mu_X(t) \qquad\text{for $k=0,1,2,\dots$}.$$
It is an immediate consequence of Theorem \ref{main2} that the distribution $\mu_{P(X_1,\dots,X_n)}$ of $P(X_1,\dots,X_n)$ for any non-constant self-adjoint polynomial $P$ cannot have atoms, if $\delta^\ast(X_1,\dots,X_n)=n$. The precise statement reads as follows. 

\begin{corollary}\label{cor-main2}
Let $(M,\tau)$ be a tracial $W^\ast$-probability space and let $X_1,\dots,X_n \in M$ be self-adjoint with $\delta^\ast(X_1,\dots,X_n)=n$.

Then, for any non-constant self-adjoint non-commutative polynomial $P$, the distribution $\mu_{P(X_1,\dots,X_n)}$ of $P(X_1,\dots,X_n)$ does not have atoms.
\end{corollary}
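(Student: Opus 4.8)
The plan is to deduce Corollary~\ref{cor-main2} directly from Theorem~\ref{main2}. Suppose, for contradiction, that $\mu_{P(X_1,\dots,X_n)}$ has an atom at some $\lambda\in\R$, i.e.\ $\mu_{P(X)}(\{\lambda\})>0$. Since $P$ is self-adjoint, the element $Y:=P(X_1,\dots,X_n)$ is a self-adjoint element of $\vN(X_1,\dots,X_n)$, and by Borel functional calculus the spectral projection $w:=\mathbf{1}_{\{\lambda\}}(Y)$ onto the eigenspace of $Y$ for the eigenvalue $\lambda$ belongs to $\vN(X_1,\dots,X_n)$. The hypothesis that $\lambda$ is an atom of the distribution means precisely that $\tau(w)=\mu_{P(X)}(\{\lambda\})>0$, so $w\neq 0$ (here faithfulness of $\tau$ is not even needed for this direction, but it guarantees $w\neq 0 \iff \tau(w)>0$). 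By the definition of the spectral projection we have $(Y-\lambda 1)w=0$, that is,
\[
\bigl(P(X_1,\dots,X_n)-\lambda 1\bigr)\, w = 0.
\]

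Now I would distinguish two cases according to whether $\lambda=0$ or $\lambda\neq 0$, or more cleanly, simply apply Theorem~\ref{main2} to the polynomial $\widetilde P := P - \lambda$. Since $P$ is non-constant, $\widetilde P$ is also non-constant and in particular non-zero as an element of $\C\langle Z_1,\dots,Z_n\rangle$. Thus $\widetilde P$ is a non-zero non-commutative polynomial, and $w$ is a non-zero element of $\vN(X_1,\dots,X_n)$ with $\widetilde P(X_1,\dots,X_n)\, w = 0$. This directly contradicts the conclusion of Theorem~\ref{main2}, which asserts that under the assumption $\delta^\ast(X_1,\dots,X_n)=n$ no such zero divisor can exist. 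Hence no atom can occur, and $\mu_{P(X_1,\dots,X_n)}$ is atomless.

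There is essentially no hard step here: the content is entirely in Theorem~\ref{main2}, and the corollary is the observation that an atom of a self-adjoint element corresponds to a non-trivial spectral projection, which is a zero divisor for the shifted polynomial. The only point requiring a moment's care is the translation $P\mapsto P-\lambda$ and the remark that it preserves non-constancy, so that Theorem~\ref{main2} applies; one also notes that $\vN(X_1,\dots,X_n)$ is closed under Borel functional calculus applied to its self-adjoint elements, which is standard for von Neumann algebras. I would therefore expect the write-up to be just a few lines, with the main obstacle — proving absence of zero divisors — already resolved in the preceding theorem.
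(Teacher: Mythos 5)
Your argument is correct and is precisely the one the paper uses: an atom at $\lambda$ yields a non-zero spectral projection $w\in\vN(X_1,\dots,X_n)$ with $(P(X)-\lambda 1)w=0$, and since $P-\lambda$ is a non-zero polynomial this contradicts Theorem~\ref{main2}. Nothing to add.
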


Indeed, any atom $\alpha$ of the distribution $\mu_{P(X_1,\dots,X_n)}$, i.e. any $\alpha\in\R$ satisfying $\mu_{P(X_1,\dots,X_n)}(\{\alpha\})\neq 0$, leads by the spectral theorem for bounded self-adjoint operators on Hilbert spaces to a non-zero projection $u$ satisfying $(P(X_1,\dots,X_n) - \alpha 1)u = 0$. Thus, applying Theorem \ref{main2} yields immediately the statement of Corollary \ref{cor-main2}.

We point out that the conclusions of both Theorem \ref{main2} and Corollary \ref{cor-main2} were shown under the stronger assumption of finite non-microstates free Fisher information in an earlier version of this paper. In the above stated generality they appeared first in \cite{Shly2014}, where the proof is based on results from \cite{Connes-Shlyakhtenko}. We are going to prove those statements in a more direct way by refining our initial methods.

More precisely, in Subsection \ref{subsec:finite_Fisher}, we will give a quantitative version of our key idea that under the assumption of finite non-microstates Fisher information there is a strong relation between kernels of polynomials and the kernels of their derivatives.

Since the semicircular perturbation is inherent in the definition of the non-microstates free entropy as well as the corresponding entropy dimension, we find ourselves in the setting of finite free Fisher information. Therefore, we will study in Subsection \ref{subsec:entropy_dimension} the behavior of the results found in Subsection \ref{subsec:finite_Fisher} under semicircular perturbations -- roughly speaking, we will be interested in the case where the perturbation tends to zero.

Finally, in Subsection \ref{subsec:proof_thm}, which is dedicated to the proof of Theorem \ref{main2}, we will deduce from this observation a certain reduction argument that allows us to reduce successively the degree of the polynomial $P$ satisfying the conditions of Theorem \ref{main2}.

\subsection{Non-microstates free entropy and free entropy dimension}
\label{subsec:definitions}

We want to catch up now on the definition of the non-microstates free entropy dimension.

Let $(M,\tau)$ be a tracial $W^\ast$-probability space and let self-adjoint elements $X_1,\dots,X_n\in M$ be given.

By possibly enlarging $(M,\tau)$, we may always assume that $(M,\tau)$ contains additionally semicircular elements $S_1,\dots,S_n$ such that
$$\{X_1,\dots,X_n\}, \{S_1\}, \dots, \{S_n\}$$
are freely independent. Indeed, this can be done by replacing $(M,\tau)$ by the free product $(M,\tau) \ast_\C (L(\F_n),\tau_n)$ of $(M,\tau)$ with the free group factor $(L(\F_n),\tau_n)$.

Following Voiculescu \cite{Voi-Entropy-V}, we define the \emph{non-microstates free entropy} $\chi^\ast(X_1,\dots,X_n)$ of $X_1,\dots,X_n$ by
\begin{align*}
\lefteqn{\chi^\ast(X_1,\dots,X_n)}\\
&\quad := \frac{1}{2} \int^\infty_0\Big(\frac{n}{1+t}-\Phi^\ast(X_1+\sqrt{t}S_1,\dots,X_n+\sqrt{t}S_n)\Big)\, dt + \frac{n}{2}\log(2\pi e).
\end{align*}
We need to note that the function
$$t\mapsto \Phi^\ast(X_1+\sqrt{t}S_1,\dots,X_n+\sqrt{t}S_n)$$
is well-defined, since \cite[Corollary 3.9]{Voi-Entropy-V} tells us that there exists a conjugate system of $(X_1+\sqrt{t}S_n,\dots,X_n+\sqrt{t}S_n)$ for all $t>0$. Moreover, we have the inequalities (cf. \cite[Corollary 6.14]{Voi-Entropy-V})
\begin{equation}\label{eq:Fisher_perturbation}
\frac{n^2}{C^2 + nt} \leq \Phi^\ast(X_1+\sqrt{t}S_1,\dots,X_n+\sqrt{t}S_n) \leq \frac{n}{t} \quad\text{for all $t>0$},
\end{equation}
where $C^2 := \tau(X_1^2 + \dots + X_n^2)$. The left inequality in \eqref{eq:Fisher_perturbation} particularly implies that (cf. \cite[Proposition 7.2]{Voi-Entropy-V})
$$\chi^\ast(X_1,\dots,X_n) \leq \frac{n}{2}\log(2\pi e n^{-1} C^2).$$

This allows to define the \emph{non-microstates free entropy dimension} $\delta^\ast(X_1,\dots,X_n)$ by
$$\delta^\ast(X_1,\dots,X_n) := n - \liminf_{\epsilon \searrow 0} \frac{\chi^\ast(X_1+\sqrt{\epsilon}S_1,\dots,X_n+\sqrt{\epsilon}S_n)}{\log(\sqrt{\epsilon})}.$$

We note that there is actually a variant of $\delta^\ast(X_1,\dots,X_n)$, given by
$$\hat\delta^\star(X_1,\dots,X_n) := n - \liminf_{t\searrow0} t \Phi^\ast(X_1+\sqrt{t}S_1,\dots,X_n+\sqrt{t}S_n),$$
which is formally obtained by applying L'Hospital's rule to the $\liminf$ appearing in the definition of $\delta^\ast(X_1,\dots,X_n)$. In \cite{Connes-Shlyakhtenko}, where $\hat\delta^\star$ was introduced, it was denoted by
$\delta^\star$; we have slightly changed the notation for better legibility.
Due to \eqref{eq:Fisher_perturbation}, we have that $0 \leq \hat\delta^\star(X_1,\dots,X_n) \leq n$. 

It was already mentioned in \cite{Shly2014} that $\delta^\ast(X_1,\dots,X_n)=n$ or even $\hat\delta^\star(X_1,\dots,X_n)=n$ are the weakest possible assumptions where we can expect a version of Theorem \ref{main2} to hold true. Accordingly, it sits at the end of a longer chain of general implications, namely
\begin{align*}
\Phi^\ast(X_1,\dots,X_n) < \infty & \quad \Longrightarrow \quad \chi^\ast(X_1,\dots,X_n) > - \infty\\
                                  & \quad \Longrightarrow \quad \delta^\ast(X_1,\dots,X_n)=n\\
                                  & \quad \Longrightarrow \quad \hat\delta^\star(X_1,\dots,X_n)=n.
\end{align*}
The first implication follows by definition of $\chi^\ast(X_1,\dots,X_n)$, the second implication is a direct consequence of the definition of $\delta^\ast(X_1,\dots,X_n)$, and the last implication is justified by
$\hat\delta^\star(X_1,\dots,X_n) \geq \delta^\ast(X_1,\dots,X_n),$
which was shown in \cite[Lemma 4.1]{Connes-Shlyakhtenko} by a straightforward computation.

\subsection{The case of finite non-microstates free Fisher information revisited}
\label{subsec:finite_Fisher}

Let $(M,\tau)$ be a tracial $W^\ast$-probability space and let $X_1,\dots,X_n \in M$ be self-adjoint with $\Phi^\ast(X_1,\dots,X_n) < \infty$.

As we have seen in Section \ref{algebraic_relations}, those conditions guarantee that, for $j=1,\dots,n$, there exists a unique derivation
$$\partial_j:\ \C\langle X_1,\dots,X_n\rangle \rightarrow \C\langle X_1,\dots,X_n\rangle \otimes \C\langle X_1,\dots,X_n\rangle,$$
which is determined by the condition $\partial_j X_i = \delta_{i,j} 1 \otimes 1$ for $i=1,\dots,n$. For each $j=1,\dots,n$, we may consider $\partial_j$ as a densely defined unbounded linear operator
$$\partial_j:\ L^2(X_1,\dots,X_n,\tau) \supseteq D(\partial_j) \rightarrow L^2(X_1,\dots,X_n, \tau \otimes \tau)$$
with domain $D(\partial_j) := \C\langle X_1,\dots,X_n\rangle$. Note that as $L^2(X_1,\dots,X_n,\tau)$ stands for the closure of $\C\langle X_1,\dots,X_n\rangle$ in $L^2(M,\tau)$, the closure of $\C\langle X_1,\dots,X_n\rangle^{\otimes 2}$ in the $L^2$-space $L^2(M \overline{\otimes} M,\tau \otimes \tau)$, constructed over the von Neumann algebra tensor product $M \overline{\otimes} M$ of $M$ with itself, is denoted here by $L^2(X_1,\dots,X_n,\tau \otimes \tau)$.

Since due to $\Phi^\ast(X_1,\dots,X_n) < \infty$ a conjugate system $(\xi_1,\dots,\xi_n)$ for $(X_1,\dots,X_n)$ exists, we see by \eqref{conjugate_relations} that $1 \otimes 1$ belongs to the domain of definition of the adjoints $\partial_1^\ast,\dots,\partial_n^\ast$ and that $\xi_j = \partial_j^\ast(1 \otimes 1)$ holds for $j=1,\dots,n$.

The subsequent considerations will be based on several well-known facts about those operators $\partial_j$, which we collect here for reader's convenience.

\begin{lemma}[Corollary 4.2 and Proposition 4.3 in \cite{Voi-Entropy-V}]\label{Voi-formula}
Let $(M,\tau)$ be a tracial $W^\ast$-probability space. If $X_1,\dots,X_n \in M$ are self-adjoint with $\Phi^\ast(X_1,\dots,X_n) < \infty$, then we have for $j=1,\dots,n$ that
$$\C\langle X_1,\dots,X_n\rangle \otimes \C\langle X_1,\dots,X_n\rangle \subseteq D(\partial_j^\ast),$$
i.e. $\partial_j^\ast$ is densely defined as well and thus $\partial_j$ is closable. More explicitly, we have for each $Y\in \C\langle X_1,\dots,X_n\rangle^{\otimes 2}$ the formula
$$\partial_j^\ast(Y) = m_{\xi_j}(Y) - m_1(\id\otimes\tau\otimes\id)(\partial_j\otimes\id+\id\otimes\partial_j)(Y).$$
\end{lemma}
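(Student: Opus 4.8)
The plan is to verify directly the defining relation of the adjoint, i.e.\ to show that for each $j$, each $Y\in\C\langle X_1,\dots,X_n\rangle^{\otimes2}$, and each $\zeta\in D(\partial_j)=\C\langle X_1,\dots,X_n\rangle$ one has $\langle\partial_j\zeta,Y\rangle = \langle\zeta,R_j(Y)\rangle$, where $R_j(Y)$ denotes the element $m_{\xi_j}(Y) - m_1(\id\otimes\tau\otimes\id)(\partial_j\otimes\id+\id\otimes\partial_j)(Y)$ of $L^2(X_1,\dots,X_n,\tau)$. By bilinearity it suffices to treat simple tensors $Y=a\otimes b$ with $a,b\in\C\langle X_1,\dots,X_n\rangle$, since these span $\C\langle X_1,\dots,X_n\rangle^{\otimes2}$; note that $R_j(a\otimes b)=a\xi_j b-\sum_{\partial_j a=a'\otimes a''}\tau(a'')a'b-\sum_{\partial_j b=b'\otimes b''}\tau(b')ab''$. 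Once the adjoint relation is established, the inclusion $\C\langle X_1,\dots,X_n\rangle^{\otimes2}\subseteq D(\partial_j^\ast)$ is immediate, and since $\C\langle X_1,\dots,X_n\rangle^{\otimes2}$ is by definition dense in $L^2(X_1,\dots,X_n,\tau\otimes\tau)$, the operator $\partial_j^\ast$ is densely defined; a densely defined operator between Hilbert spaces whose adjoint is densely defined is closable, which yields closability of $\partial_j$.

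I would run the verification by induction on $\deg a+\deg b$. For the base case $a=b=1$ one has $R_j(1\otimes1)=\xi_j$, because $\partial_j 1=0$ annihilates both correction terms; moreover $\langle\partial_j\zeta,1\otimes1\rangle=(\tau\otimes\tau)(\partial_j\zeta)=\tau(\xi_j\zeta)$ by the conjugate relation \eqref{conjugate_relations}, while $\langle\zeta,\xi_j\rangle=\tau(\zeta\xi_j^\ast)$. These agree once we know $\xi_j=\xi_j^\ast$, which follows by applying \eqref{conjugate_relations} to both $P$ and $P^\ast$ together with traciality and faithfulness of $\tau$ (here one uses that $\partial_j$ intertwines the involution on $\C\langle Z_1,\dots,Z_n\rangle$ with the flip-involution $u\otimes v\mapsto v^\ast\otimes u^\ast$, and that $\tau\otimes\tau$ is flip-invariant). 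For the inductive step, if $\deg a\geq1$ write $a=X_i a_0$; by traciality of $\tau$ one has $\langle\partial_j\zeta,X_i a_0\otimes b\rangle=\langle(X_i\otimes1)\partial_j\zeta,a_0\otimes b\rangle$, and the Leibniz rule gives $(X_i\otimes1)\partial_j\zeta=\partial_j(X_i\zeta)-\delta_{i,j}(1\otimes\zeta)$, so that
\[
\langle\partial_j\zeta,\,X_i a_0\otimes b\rangle=\langle\partial_j(X_i\zeta),\,a_0\otimes b\rangle-\delta_{i,j}\langle 1\otimes\zeta,\,a_0\otimes b\rangle .
\]
Applying the induction hypothesis to the first term (legitimate, as $X_i\zeta\in\C\langle X_1,\dots,X_n\rangle$ and $\deg a_0+\deg b<\deg a+\deg b$) and unwinding the scalar factors via traciality reduces the whole identity to the purely algebraic recursion $R_j(X_i a_0\otimes b)=X_i\,R_j(a_0\otimes b)-\delta_{i,j}\,\tau(a_0)\,b$, which is checked in one line from $\partial_j(X_i a_0)=\delta_{i,j}(1\otimes a_0)+(X_i\otimes1)\partial_j a_0$. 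The remaining case $\deg a=0$, $\deg b\geq1$ is symmetric, peeling a generator off the right end of $b$ using $\partial_j(\zeta X_i)=\delta_{i,j}(\zeta\otimes1)+(\partial_j\zeta)(1\otimes X_i)$ and the recursion $R_j(a\otimes b_0 X_i)=R_j(a\otimes b_0)X_i-\delta_{i,j}\tau(b_0)a$.

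There is no genuine analytic obstacle here: the only input with real content is the conjugate relation \eqref{conjugate_relations}, which enters solely through the base case, and everything else is bookkeeping with the Leibniz rule and the trace. The point that needs most care is keeping track of complex conjugates and of the placement of factors when moving generators across the inner products (recall that $\langle P,Q\rangle=\tau(PQ^\ast)$ is conjugate-linear in the second variable), and correctly identifying the flip involution on the tensor product so that $\xi_j=\xi_j^\ast$ comes out right. Finally, since $\C\langle X_1,\dots,X_n\rangle$ is a core for $\partial_j$, checking the adjoint relation on this dense domain determines $\partial_j^\ast$ on all of $\C\langle X_1,\dots,X_n\rangle^{\otimes2}$; alternatively, the entire lemma may simply be quoted from \cite{Voi-Entropy-V}.
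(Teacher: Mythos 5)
This lemma is not proved in the paper at all: the authors explicitly state it as a recall of known facts and cite Corollary 4.2 and Proposition 4.3 of \cite{Voi-Entropy-V} for the proof. So there is no ``paper's proof'' to compare against, only a citation. Your proposal, by contrast, gives a complete direct verification, and it is correct. The strategy -- reduce by bilinearity to simple tensors $a\otimes b$, establish the base case $1\otimes 1$ from the conjugate relations together with the self-adjointness $\xi_j=\xi_j^\ast$ (obtained from the flip-symmetry of $\partial_j$ under $\ast$ and faithfulness of $\tau$), and then induct on $\deg a+\deg b$ by peeling off generators on either side using traciality and the Leibniz rule -- is essentially the argument one finds in Voiculescu's original paper, reorganized as a degree induction rather than his direct computation; it yields the identical conclusion and there is no genuine difference in content. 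One small point worth making explicit: the step $\xi_j=\xi_j^\ast$ uses that $\xi_j-\xi_j^\ast$ lies in $L^2(X_1,\dots,X_n,\tau)$ and is orthogonal to the dense subspace $\C\langle X_1,\dots,X_n\rangle$ there; this needs $\xi_j$ to be the genuine conjugate variable (in $L^2(X_1,\dots,X_n,\tau)$, not merely something satisfying the conjugate relations in $L^2(M,\tau)$), which is guaranteed by the hypothesis $\Phi^\ast(X_1,\dots,X_n)<\infty$. Also, the final remark about $\C\langle X_1,\dots,X_n\rangle$ being a core is superfluous here, since it is the declared domain of $\partial_j$ and you are checking the defining relation of the adjoint directly on it; no separate core argument is needed.
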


Here, for any $\eta\in L^2(M,\tau)$, we denote by $m_\eta$ the linear operator $m_\eta: M\otimes M \rightarrow L^2(M,\tau)$ defined on the algebraic tensor product $M\otimes M$, which is given by $m_\eta(a_1 \otimes a_2) := a_1\eta a_2$. And thus of course, $m_1(a_1\otimes a_2)=a_1a_2$.

The lemma above allows us to conclude that in particular
\begin{equation}\label{conjugate_formulas}
\begin{aligned}
\partial_j^\ast(P\otimes 1) &= P \xi_j - (\id\otimes\tau)(\partial_j P),\\
\partial_j^\ast(1\otimes P) &= \xi_j P - (\tau\otimes\id)(\partial_j P)
\end{aligned}
\end{equation}
holds for $j=1,\dots,n$ and any $P\in\C\langle X_1,\dots,X_n\rangle$.

\begin{lemma}[Lemma 12 in \cite{Dab-Gamma}]\label{Dab-estimates}
Let $(M,\tau)$ be a tracial $W^\ast$-probability space. If $X_1,\dots,X_n \in M$ are self-adjoint such that the condition $\Phi^\ast(X_1,\dots,X_n) < \infty$ is satisfied, then we have for each $P\in\C\langle X_1,\dots,X_n\rangle$ that
\begin{equation}\label{Dab-estimates-1}
\begin{aligned}
\|P \xi_j - (\id\otimes\tau)(\partial_j P)\|_2 &\leq \|\xi_j\|_2 \|P\|,\\
\|\xi_j P - (\tau\otimes\id)(\partial_j P)\|_2 &\leq \|\xi_j\|_2 \|P\|
\end{aligned}
\end{equation}
and therefore
\begin{equation}\label{Dab-estimates-2}
\begin{aligned}
\|(\id\otimes\tau)(\partial_j P)\|_2 &\leq 2\|\xi_j\|_2 \|P\|,\\
\|(\tau\otimes\id)(\partial_j P)\|_2 &\leq 2\|\xi_j\|_2 \|P\|.
\end{aligned}
\end{equation}
\end{lemma}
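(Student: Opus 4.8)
First I would note that, by the explicit formulas \eqref{conjugate_formulas}, the two inequalities in \eqref{Dab-estimates-1} are precisely
\[
\|\partial_j^\ast(P\otimes 1)\|_2\leq\|\xi_j\|_2\,\|P\|\qquad\text{and}\qquad\|\partial_j^\ast(1\otimes P)\|_2\leq\|\xi_j\|_2\,\|P\|,
\]
and that, once these are known, \eqref{Dab-estimates-2} follows at once from the triangle inequality: since $(\tau\otimes\id)(\partial_j P)=\xi_j P-\partial_j^\ast(1\otimes P)$ and $\|\xi_j P\|_2\leq\|\xi_j\|_2\|P\|$ (right multiplication by $P$ being bounded on $L^2(M,\tau)$ with norm $\leq\|P\|$), one gets $\|(\tau\otimes\id)(\partial_j P)\|_2\leq\|\xi_j P\|_2+\|\partial_j^\ast(1\otimes P)\|_2\leq 2\|\xi_j\|_2\|P\|$, and symmetrically for $(\id\otimes\tau)(\partial_j P)$. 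Moreover the two inequalities in \eqref{Dab-estimates-1} are equivalent: since $\partial_j$ intertwines the involution of $\C\langle X_1,\dots,X_n\rangle$ with the flip-and-adjoint involution $a\otimes b\mapsto b^\ast\otimes a^\ast$ of $\C\langle X_1,\dots,X_n\rangle^{\otimes 2}$, and since $\xi_j=\xi_j^\ast$ (a standard property of conjugate systems, read off from \eqref{conjugate_relations} and the traciality of $\tau$), one checks from \eqref{conjugate_formulas} that $\partial_j^\ast(P\otimes 1)=\bigl(\partial_j^\ast(1\otimes P^\ast)\bigr)^\ast$. Hence everything reduces to the single estimate $\|\partial_j^\ast(1\otimes P)\|_2\leq\|\xi_j\|_2\|P\|$.

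For that estimate I would start from Lemma \ref{Voi-formula}, which via \eqref{conjugate_formulas} reads $\partial_j^\ast(1\otimes P)=\xi_j P-(\tau\otimes\id)(\partial_j P)$. The first summand is already under control, being bounded in $\|\cdot\|_2$ by $\|\xi_j\|_2\|P\|$; the whole difficulty is to show that subtracting the correction $(\tau\otimes\id)(\partial_j P)$ does not push the norm past $\|\xi_j\|_2\|P\|$ — and since a crude triangle-inequality estimate only gives $3\|\xi_j\|_2\|P\|$, one genuinely needs the cancellation between the two terms. The structural input I would use is the ``quasi right-module'' recursion
\[
\partial_j^\ast(1\otimes QX_i)=\bigl(\partial_j^\ast(1\otimes Q)\bigr)X_i-\delta_{i,j}\,\tau(Q)\qquad\bigl(Q\in\C\langle X_1,\dots,X_n\rangle,\ 1\leq i\leq n\bigr),
\]
obtained by applying the Leibniz rule \eqref{derivation-def} to $\partial_j(QX_i)$ and then \eqref{conjugate_formulas}; combined with the base case $\partial_j^\ast(1\otimes 1)=\xi_j$, it lets one induct on the degree of $P$: expand $\|\partial_j^\ast(1\otimes P)\|_2^2$, peel the last variable off $P$ using the recursion to reduce to strictly lower degree, and control the cross terms that arise — all of which involve inner products against $\xi_j$ — by the conjugate relations \eqref{conjugate_relations} (which trade any $\tau(\xi_j S(X))$ for $(\tau\otimes\tau)(\partial_j S)$).

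The hard part, and the main obstacle, is exactly this last bookkeeping: running the induction so that the cancellation between $\xi_j P$ and $(\tau\otimes\id)(\partial_j P)$ is captured precisely enough that the constant which finally emerges is $\|\xi_j\|_2$ and not merely some multiple of it. Conjugate relations together with the Leibniz rule only ever produce identities and are not by themselves enough to yield an inequality; the work lies in interleaving them, in the right order, with the Cauchy--Schwarz inequality and the positivity and traciality of $\tau$. A second approach I would pursue in parallel is to prove directly that $\partial_j^\ast$, restricted to the first column $\{a\otimes 1:a\in M\}$ of the appropriate operator-space (Haagerup-type) tensor product of $M$ with itself, is bounded into $L^2(M,\tau)$ with norm exactly $\|\xi_j\|_2$, again starting from Lemma \ref{Voi-formula} and from the boundedness of the map $a\otimes b\mapsto a\xi_j b$.
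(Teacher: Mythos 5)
The paper does not prove this lemma from scratch: it quotes the first inequality in each of \eqref{Dab-estimates-1} and \eqref{Dab-estimates-2} directly from Lemma~12 of \cite{Dab-Gamma}, and derives only the second (``flipped'') inequalities from the first ones, using the duality identity \eqref{duality} together with $\xi_j=\xi_j^\ast$. Your reduction steps are correct and in fact slightly sharper than what the paper records: you observe that the two inequalities in \eqref{Dab-estimates-1} are adjoints of one another (which encodes the same information as \eqref{duality} and $\xi_j=\xi_j^\ast$) and that \eqref{Dab-estimates-2} then follows from \eqref{Dab-estimates-1} by a one-line triangle inequality, so that the entire lemma collapses to the single bound $\|\partial_j^\ast(1\otimes P)\|_2\leq\|\xi_j\|_2\|P\|$.

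That single bound, however, is precisely the part you do not prove. You flag it yourself as ``the hard part'' and only sketch two candidate strategies --- an induction on the degree of $P$ using the right-module recursion for $\partial_j^\ast$, and an operator-space boundedness argument --- without carrying either one through. This is a genuine gap: obtaining the sharp constant $\|\xi_j\|_2$, rather than the $3\|\xi_j\|_2$ that the naive triangle inequality applied to $\partial_j^\ast(1\otimes P)=\xi_j P-(\tau\otimes\id)(\partial_j P)$ gives, is exactly the content of Dabrowski's Lemma~12, and the cancellation it exploits is not a formality. The route the paper takes is simply to cite \cite{Dab-Gamma} for the first estimates and invoke \eqref{duality} for the second; if you instead want a self-contained proof, the induction you outline must actually be executed, and the bookkeeping you defer is where the real work lies.
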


Note that Lemma \ref{Dab-estimates} is actually a slight extension of Lemma 12 in \cite{Dab-Gamma}, since we added in \eqref{Dab-estimates-1} and \eqref{Dab-estimates-2} each time the second estimates. In fact, they can be easily deduced from the first estimates by using the more general identity
\begin{equation}\label{duality}
(\tau\otimes\id)(P_1 (\partial_i P_2))^\ast = (\id\otimes\tau)((\partial_i P_2^\ast) P_1^\ast)
\end{equation}
for all $P_1,P_2\in\C\langle X_1,\dots,X_n\rangle$, which itself can easily be checked on monomials.

Moreover, we note that thanks to \eqref{conjugate_formulas}, the inequalities in \eqref{Dab-estimates-1} can be rewritten in the following way:
\begin{equation}\label{conjugate_norms}
\begin{aligned}
\|\partial_j^\ast(P \otimes 1)\|_2 &\leq \|\xi_j\|_2 \|P\|\\
\|\partial_j^\ast(1 \otimes P)\|_2 &\leq \|\xi_j\|_2 \|P\|
\end{aligned}
\end{equation}

We will need the following extension of \eqref{Dab-estimates-2} and of \eqref{conjugate_norms}.

\begin{corollary}\label{key-estimates}
In the situation described above, the following holds true:
\begin{itemize}
 \item[(a)] For all $Y_1,Y_2\in \C\langle X_1,\dots,X_n\rangle$, we have for $i=1,\dots,n$ that
$$\|\partial_i^\ast(Y_1 \otimes Y_2)\|_2 \leq 3 \|\xi_i\|_2 \|Y_1\| \|Y_2\|.$$
 \item[(b)] For all $Y_1,Y_2\in \C\langle X_1,\dots,X_n\rangle$, we have for $i=1,\dots,n$ that
\begin{align*}
\|(\id\otimes\tau)((\partial_i Y_1) 1 \otimes Y_2)\|_2 & \leq 4 \|\xi_i\|_2 \|Y_1\| \|Y_2\|,\\
\|(\tau\otimes\id)(Y_1 \otimes 1 (\partial_i Y_2))\|_2 & \leq 4 \|\xi_i\|_2 \|Y_1\| \|Y_2\|.
\end{align*}
\end{itemize}
\end{corollary}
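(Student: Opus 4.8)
The plan is to reduce both estimates to the one-slot inequalities already established in \eqref{conjugate_norms} and \eqref{Dab-estimates-2}, by peeling the two tensor legs apart one at a time with the help of Voiculescu's formula for $\partial_i^\ast$ from Lemma \ref{Voi-formula} and of the Leibniz rule \eqref{derivation-def}. Throughout I would use without further comment that, for $Y\in\C\langle X_1,\dots,X_n\rangle$, left and right multiplication by $Y$ on $L^2(X_1,\dots,X_n,\tau)$ have operator norm at most $\|Y\|$, so that $\|YZ\|_2\le\|Y\|\,\|Z\|_2$ and $\|ZY\|_2\le\|Z\|_2\,\|Y\|$, and likewise on the $L^2$-space of the tensor square.

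For part (a), I would feed $Y=Y_1\otimes Y_2$ into the formula of Lemma \ref{Voi-formula}. A direct computation of the three resulting terms — $m_{\xi_i}(Y_1\otimes Y_2)=Y_1\xi_iY_2$, and $m_1(\id\otimes\tau\otimes\id)$ applied to $(\partial_i\otimes\id)(Y_1\otimes Y_2)=(\partial_i Y_1)\otimes Y_2$ and to $(\id\otimes\partial_i)(Y_1\otimes Y_2)=Y_1\otimes(\partial_i Y_2)$ — followed by regrouping the first two via \eqref{conjugate_formulas}, yields
$$\partial_i^\ast(Y_1\otimes Y_2)=\big(\partial_i^\ast(Y_1\otimes 1)\big)Y_2-Y_1\,(\tau\otimes\id)(\partial_i Y_2).$$
Now the first summand is bounded in $\|\cdot\|_2$ by $\|\partial_i^\ast(Y_1\otimes 1)\|_2\,\|Y_2\|\le\|\xi_i\|_2\|Y_1\|\|Y_2\|$ thanks to the first line of \eqref{conjugate_norms}, and the second by $\|Y_1\|\,\|(\tau\otimes\id)(\partial_i Y_2)\|_2\le 2\|\xi_i\|_2\|Y_1\|\|Y_2\|$ thanks to the second line of \eqref{Dab-estimates-2}; adding the two bounds gives the claimed factor $3$.

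For part (b), the key step is to rewrite $(\partial_i Y_1)(1\otimes Y_2)=\partial_i(Y_1Y_2)-(Y_1\otimes 1)(\partial_i Y_2)$ using \eqref{derivation-def}. Applying $\id\otimes\tau$ and observing that $(\id\otimes\tau)\big((Y_1\otimes 1)(\partial_i Y_2)\big)=Y_1\,(\id\otimes\tau)(\partial_i Y_2)$, I obtain
$$(\id\otimes\tau)\big((\partial_i Y_1)(1\otimes Y_2)\big)=(\id\otimes\tau)(\partial_i(Y_1Y_2))-Y_1\,(\id\otimes\tau)(\partial_i Y_2),$$
and bounding both terms with the first line of \eqref{Dab-estimates-2} (together with $\|Y_1Y_2\|\le\|Y_1\|\|Y_2\|$) produces $2\|\xi_i\|_2\|Y_1\|\|Y_2\|+2\|\xi_i\|_2\|Y_1\|\|Y_2\|$, i.e.\ the factor $4$. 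The second inequality in (b) is symmetric: write $(Y_1\otimes 1)(\partial_i Y_2)=\partial_i(Y_1Y_2)-(\partial_i Y_1)(1\otimes Y_2)$, apply $\tau\otimes\id$, use $(\tau\otimes\id)\big((\partial_i Y_1)(1\otimes Y_2)\big)=\big((\tau\otimes\id)(\partial_i Y_1)\big)Y_2$, and estimate with the second line of \eqref{Dab-estimates-2}.

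I do not expect any genuine obstacle here: the whole argument is bookkeeping with the $\C\langle X_1,\dots,X_n\rangle$-bimodule structure of the algebraic tensor product and with the slice maps $\id\otimes\tau$ and $\tau\otimes\id$. The only points that require a little care are that $\partial_i$ is defined only on polynomials, so every displayed identity above is first an algebraic identity (in $\C\langle X_1,\dots,X_n\rangle$, respectively in its algebraic tensor square) to which the $L^2$-estimates are applied only afterwards, and that one must keep track of on which side the surviving polynomial multiplies so as to invoke the correct line of \eqref{conjugate_norms} and \eqref{Dab-estimates-2}. The constants $3$ and $4$ are surely not optimal, but they are all that is needed in the sequel.
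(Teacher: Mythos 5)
Your proof is correct and follows essentially the same decomposition as the paper: feed $Y_1\otimes Y_2$ into Voiculescu's formula, regroup via \eqref{conjugate_formulas} to isolate $\partial_i^\ast(Y_1\otimes 1)Y_2$, and bound the remaining term; then ``integrate by parts'' via the Leibniz rule for part (b). The only cosmetic difference is that in (a) you apply the second line of \eqref{Dab-estimates-2} directly to $(\tau\otimes\id)(\partial_i Y_2)$, whereas the paper first passes through the duality $\sigma((\partial_i Y_2)^\ast)=\partial_i Y_2^\ast$ to reduce to the first line -- both routes are equally valid and yield the same constants.
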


\begin{proof}
According to Voiculescu's formula, which we recalled in Lemma \ref{Voi-formula}, we have for all $Y_1,Y_2 \in \C\langle X_1,\dots,X_n\rangle$ that
\begin{align*}
 \partial_i^\ast (Y_1 \otimes Y_2)
 &= Y_1 \xi_i Y_2 - m_1(\id\otimes\tau\otimes\id)(\partial_i \otimes \id + \id \otimes \partial_i) (Y_1 \otimes Y_2)\\
 &= Y_1 \xi_i Y_2 - (\id\otimes\tau)(\partial_i Y_1) Y_2 - Y_1 (\tau\otimes\id)(\partial_i Y_2)\\
 &= \partial_i^\ast(Y_1 \otimes 1) Y_2 - Y_1 (\tau\otimes\id)(\partial_i Y_2)\\
 &= \partial_i^\ast(Y_1 \otimes 1) Y_2 - Y_1 (\id\otimes\tau)(\partial_i Y_2^\ast)^\ast
\end{align*}
and thus, by applying the estimates \eqref{Dab-estimates-2} and \eqref{conjugate_norms}, that
\begin{align*}
\|\partial_i^\ast(Y_1 \otimes Y_2)\|_2
 &\leq \|\partial_i^\ast(Y_1 \otimes 1)\|_2 \|Y_2\| + \|Y_1\| \|(\id\otimes\tau)(\partial_i Y_2^\ast)\|_2\\
 &\leq 3 \|\xi_i\|_2 \|Y_1\| \|Y_2\|.
\end{align*}
This shows the validity of (a). For proving (b), we first follow the idea of ``integration by parts'' in order to obtain
\begin{align*}
(\id\otimes\tau)((\partial_i Y_1) 1 \otimes Y_2)
 &= (\id\otimes\tau)(\partial_i(Y_1 Y_2)) - (\id\otimes\tau)(Y_1 \otimes 1 (\partial_i Y_2))\\
 &= (\id\otimes\tau)(\partial_i(Y_1 Y_2)) - Y_1 (\id\otimes\tau)(\partial_i Y_2)
\end{align*}
for arbitrary $Y_1,Y_2\in\C\langle X_1,\dots,X_n\rangle$. From this, we can easily deduce by using \eqref{Dab-estimates-2} that
\begin{align*}
\|(\id\otimes\tau)(&(\partial_i Y_1) 1 \otimes Y_2)\|_2\\
 &\leq \|(\id\otimes\tau)(\partial_i(Y_1 Y_2))\|_2 + \|Y_1\| \|(\id\otimes\tau)(\partial_i Y_2)\|_2\\
 &\leq 4 \|\xi_i\|_2 \|Y_1\| \|Y_2\|
\end{align*}
which is the first inequality. The second inequality can be proven similarly.
\end{proof}

We will also need the following easy application of Kaplansky's density theorem.

\begin{lemma}\label{approximation}
For any $w=w^\ast\in\vN(X_1,\dots,X_n)$, there exists a sequence $(w_k)_{k\in\N}$ of elements in $\C\langle X_1,\dots,X_n\rangle$ such that:
\begin{itemize}
 \item[(i)] $w_k = w_k^\ast$
 \item[(ii)] $\displaystyle{\sup_{k\in\N} \|w_k\| \leq \|w\|}$
 \item[(iii)] $\|w_k-w\|_2 \rightarrow 0$ for $k\rightarrow\infty$
\end{itemize}
\end{lemma}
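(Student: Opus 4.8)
The plan is to invoke Kaplansky's density theorem in its self-adjoint form. First I would reduce to the case $\|w\|\le 1$: if $w=0$, take $w_k=0$ for all $k$; otherwise replace $w$ by $w/\|w\|$, construct the approximating sequence for this normalized element, and rescale it by $\|w\|$ at the very end. This rescaling preserves self-adjointness (i), turns the bound $\|w_k'\|\le 1$ into $\|w_k\|\le\|w\|$ for (ii), and only multiplies the errors in (iii) by $\|w\|$. So from now on assume $\|w\|\le 1$.

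Next I would record that $\A:=\C\langle X_1,\dots,X_n\rangle$ is a unital $\ast$-subalgebra of $\vN(X_1,\dots,X_n)$ which, by the very definition of the von Neumann algebra generated by $X_1,\dots,X_n$, is dense in $\vN(X_1,\dots,X_n)$ in the weak operator topology, and hence also in the strong operator topology, since the two closures of a convex set coincide. Viewing $\vN(X_1,\dots,X_n)$ as acting on the Hilbert space $L^2(M,\tau)$ by left multiplication, Kaplansky's density theorem then tells us that the self-adjoint part of the closed unit ball of $\A$ is strongly dense in the self-adjoint part of the closed unit ball of $\vN(X_1,\dots,X_n)$. In particular, $w$ lies in the strong operator closure of the set $\{a\in\A :\ a=a^\ast,\ \|a\|\le 1\}$.

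Finally I would translate strong convergence into $\|\cdot\|_2$-convergence. For every $a\in\vN(X_1,\dots,X_n)$ one has $\|a\|_2^2=\tau(a^\ast a)=\|a\widehat{1}\|_{L^2(M,\tau)}^2$, where $\widehat{1}$ denotes the image of $1\in M$ in $L^2(M,\tau)$; this vector lies in $L^2(X_1,\dots,X_n,\tau)$ and is cyclic for $\vN(X_1,\dots,X_n)$. Hence, if a net $(w_\lambda)$ in $\{a\in\A :\ a=a^\ast,\ \|a\|\le 1\}$ converges strongly to $w$, then in particular $\|(w_\lambda-w)\widehat{1}\|_{L^2(M,\tau)}=\|w_\lambda-w\|_2\to 0$. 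Since $\|\cdot\|_2$ is a metric, this says precisely that for each $k\in\N$ there exists $w_k\in\A$ with $w_k=w_k^\ast$, $\|w_k\|\le 1$ and $\|w_k-w\|_2<\tfrac1k$; rescaling by $\|w\|$ as in the first step yields the claimed sequence.

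There is essentially no obstacle here; the only points worth a word are that the strong and weak operator closures of the $\ast$-algebra $\A$ agree (convexity), that one may pass from the net furnished by Kaplansky to an honest sequence because $\|\cdot\|_2$-convergence is metric (so no separability assumption on $M$ is needed), and that strong operator convergence forces $\|\cdot\|_2$-convergence of the operators themselves, which is immediate upon evaluating at the cyclic vector $\widehat{1}$.
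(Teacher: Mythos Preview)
Your proof is correct and rests on the same tool as the paper's, namely Kaplansky's density theorem, so the approaches are essentially the same. The execution differs in two minor respects that are worth recording: the paper applies the plain version of Kaplansky to obtain an approximating sequence in the closed ball (not a priori self-adjoint) and then replaces each $w_k$ by its real part $\tfrac12(w_k+w_k^\ast)$ to enforce (i), whereas you invoke the self-adjoint form of Kaplansky directly; and the paper deduces $\|w_k-w\|_2\to0$ by expanding $\tau\big((w_k-w)^\ast(w_k-w)\big)$ and using weak-operator continuity of $\tau$ on each term, while your route via the cyclic vector $\widehat{1}$ is shorter. You are also slightly more careful about the passage from a net to a sequence, which the paper glosses over.
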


\begin{proof}
First of all, we note that in order to prove the statement, it suffices to prove existence of a sequence $(w_k)_{k\in\N}$ of elements in $\C\langle X_1,\dots,X_n\rangle$, which satisfy only conditions (ii) and (iii). Indeed, if we replace in this case $w_k$ by its real part $\Re(w_k)=\frac{1}{2}(w_k+w_k^\ast)$, conditions (ii) and (iii) are still valid, but we have achieved condition (i) in addition.

For proving existence under these weaker conditions, we may apply Kaplansky's density theorem. This guarantees the existence of a sequence $(w_k)_{k\in\N}$ of elements in $\C\langle X_1,\dots,X_n\rangle$, such that $\|w_k\| \leq \|w\|$ for all $k\in\N$, which particularly implies (ii), and which converges to $w$ in the strong operator topology. It remains to note that, with respect to the weak operator topology, $w_k^\ast w \rightarrow w^\ast w$, $w^\ast w_k \rightarrow w^\ast w$, and $w_k^\ast w_k \rightarrow w^\ast w$ as $k\rightarrow\infty$, such that according to the continuity of $\tau$
\begin{align*}
\|w_k-w\|_2^2 &= \tau((w_k-w)^\ast(w_k-w))\\
              &= \tau(w_k^\ast w_k) - \tau(w_k^\ast w) - \tau(w^\ast w_k) + \tau(w^\ast w)
\end{align*}
tends to $0$ as $k\rightarrow 0$, which shows (iii) and thus concludes the proof.
\end{proof}

The main result of this subsection is the following proposition. There, we will use the projective norm $\|\cdot\|_\pi$ on $\C\langle X_1,\dots,X_n\rangle^{\otimes 2}$, which is given by
$$\|Q\|_\pi := \inf\bigg\{\sum^N_{k=1} \|Q_{k,1}\| \|Q_{k,2}\|\bigg|\ Q = \sum^N_{k=1} Q_{k,1} \otimes Q_{k,2}\bigg\}$$
for any $Q\in \C\langle X_1,\dots,X_n\rangle^{\otimes 2}$, where the infimum is taken over all possible decompositions of $Q$ with $Q_{k,1},Q_{k,2}\in\C\langle X_1,\dots,X_n\rangle$ for $k=1,\dots,N$ and some $N\in\N$.

Whenever it becomes necessary to mention explicitly the dependence of $\|\cdot\|_\pi$ on the underlying set of generators $X=(X_1,\dots,X_n)$, we will also write $\|\cdot\|_{\pi,X}$ instead of $\|\cdot\|_\pi$.

\begin{proposition}\label{Fisher-bound}
Let $(M,\tau)$ be a tracial $W^\ast$-probability space and let self-adjoint elements $X_1,\dots,X_n\in M$ be given such that the condition $\Phi^\ast(X_1,\dots,X_n) < \infty$ is satisfied. Let $(\xi_1,\dots,\xi_n)$ be the conjugate system for $(X_1,\dots,X_n)$. Then, for all $P\in\C\langle X_1,\dots,X_n \rangle$ and all $u,v\in\vN(X_1,\dots,X_n)$, we have
\begin{equation}\label{eq:Fisher-bound}
|\langle v^\ast (\partial_i P) u, Q\rangle| \leq 4 \|\xi_i\|_2 \bigl(\|Pu\|_2 \|v\| + \|u\| \|P^\ast v\|_2\bigr) \|Q\|_\pi
\end{equation}
for all $Q\in \C\langle X_1,\dots,X_n\rangle^{\otimes 2}$ and $i=1,\dots,n$. In particular, we have
\begin{equation}
\begin{aligned}
\lefteqn{\sum^n_{i=1} |\langle v^\ast (\partial_i P) u, Q\rangle|^2}\\
 & \qquad \leq 16 \bigl(\|Pu\|_2 \|v\| + \|u\| \|P^\ast v\|_2\bigr)^2 \Phi^\ast(X_1,\dots,X_n) \|Q\|_\pi^2
\end{aligned}
\end{equation}
for all $Q\in \C\langle X_1,\dots,X_n\rangle^{\otimes 2}$.
\end{proposition}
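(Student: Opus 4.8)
The plan is to reduce the general bound \eqref{eq:Fisher-bound} to the estimates of Corollary \ref{key-estimates} by a two-step approximation argument: first prove it when $u,v$ are \emph{polynomials} in $X_1,\dots,X_n$, then pass to the general case $u,v\in\vN(X_1,\dots,X_n)$ via Lemma \ref{approximation}.

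For the polynomial case, fix $Y_1,Y_2\in\C\langle X_1,\dots,X_n\rangle$ playing the roles of $u,v$. By bilinearity and the definition of the projective norm it suffices to prove the estimate for an elementary tensor $Q=Q_1\otimes Q_2$, since a general $Q$ decomposes as $Q=\sum_k Q_{k,1}\otimes Q_{k,2}$ and one then takes the infimum over such decompositions. For an elementary tensor I would move the polynomial factors across the inner product: using the $\C\langle X_1,\dots,X_n\rangle$-bimodule structure and the $\sharp$-notation,
\begin{equation*}
\langle Y_2^\ast(\partial_i P)Y_1,\ Q_1\otimes Q_2\rangle
= \langle \partial_i P,\ (Y_2 Q_1)\otimes(Q_2 Y_1)\rangle
= \langle P,\ \partial_i^\ast\big((Y_2 Q_1)\otimes(Q_2 Y_1)\big)\rangle,
\end{equation*}
where the first identity is just the definition of the action of $\sharp$ and the adjoint pairing on the algebraic level, and the second uses that $P\in\C\langle X_1,\dots,X_n\rangle\subseteq D(\partial_i)$ so that the pairing with the (everywhere-on-polynomials-defined) adjoint $\partial_i^\ast$ from Lemma \ref{Voi-formula} is legitimate. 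Then $|\langle P,\partial_i^\ast(\cdot)\rangle|\le\|P\|_2\,\|\partial_i^\ast(\cdot)\|_2$ by Cauchy--Schwarz, and Corollary \ref{key-estimates}(a) bounds $\|\partial_i^\ast((Y_2Q_1)\otimes(Q_2Y_1))\|_2$ by $3\|\xi_i\|_2\|Y_2Q_1\|\,\|Q_2Y_1\|\le 3\|\xi_i\|_2\|Y_2\|\,\|Y_1\|\,\|Q_1\|\,\|Q_2\|$. This already gives a bound of the shape $3\|\xi_i\|_2\|P\|_2\|Y_1\|\|Y_2\|\|Q\|_\pi$, but with $\|P\|_2$ rather than the symmetric quantity $\|PY_1\|_2\|Y_2\|+\|Y_1\|\|P^\ast Y_2\|_2$; to get the stated sharper and symmetric form I would instead distribute more carefully using the decomposition of $\partial_i^\ast(Y\otimes Z)$ recorded in the proof of Corollary \ref{key-estimates}, namely $\partial_i^\ast(Y_2Q_1\otimes Q_2Y_1)=\partial_i^\ast(Y_2Q_1\otimes1)(Q_2Y_1)-(Y_2Q_1)(\tau\otimes\id)(\partial_i(Q_2Y_1))$, and then regroup so that the factor $Y_1$ ends up attached to $P$ in one term (contributing $\|PY_1\|_2$) and $Y_2$ attached to $P^\ast$ in the other (contributing $\|P^\ast Y_2\|_2$), using \eqref{conjugate_norms} and \eqref{Dab-estimates-2}; the numerical constant $4$ comes out of combining these. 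This is the step I expect to be the main obstacle: getting the bookkeeping right so that the two ``boundary'' terms of Voiculescu's formula split cleanly into the two summands $\|Pu\|_2\|v\|$ and $\|u\|\|P^\ast v\|_2$ with a clean constant, rather than producing a cruder estimate.

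Once \eqref{eq:Fisher-bound} holds for $u,v\in\C\langle X_1,\dots,X_n\rangle$, I pass to $u,v\in\vN(X_1,\dots,X_n)$ by Kaplansky approximation. It suffices to do this one variable at a time; replacing $u$ by $u^\ast u$-style reductions is unnecessary since Lemma \ref{approximation} is stated for self-adjoint elements and any element is a linear combination of two self-adjoint ones, so by sesquilinearity in $u$ and $v$ (and adjusting constants) I may assume $u,v$ self-adjoint and pick sequences $u_k,v_k\in\C\langle X_1,\dots,X_n\rangle$ with $\sup_k\|u_k\|\le\|u\|$, $\sup_k\|v_k\|\le\|v\|$, $\|u_k-u\|_2\to0$, $\|v_k-v\|_2\to0$. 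For fixed $P$ and $Q$, the left-hand side $\langle v^\ast(\partial_iP)u,Q\rangle$ depends on $u,v$ only through the bounded operators $u,v$ acting on the fixed $L^2$-vector $(\partial_iP)$ (which lies in $\C\langle X_1,\dots,X_n\rangle^{\otimes2}\subset L^2$, and on which left/right multiplication by elements of $\vN(X_1,\dots,X_n)\,\overline\otimes\,\vN(X_1,\dots,X_n)$ is bounded), so $\langle v_k^\ast(\partial_iP)u_k,Q\rangle\to\langle v^\ast(\partial_iP)u,Q\rangle$ once I check the joint convergence $v_k^\ast(\partial_iP)u_k\to v^\ast(\partial_iP)u$ in $L^2$ — standard, since $(\partial_iP)u_k\to(\partial_iP)u$ in $L^2$ (bounded multiplier on the right, $L^2$-convergent symbols, uniform norm bound) and then left multiplication by the $\|\cdot\|$-bounded $v_k^\ast$, strongly convergent to $v^\ast$, converges in $L^2$ on the $L^2$-convergent sequence $(\partial_iP)u_k$. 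On the right-hand side, $\|Pu_k\|_2\to\|Pu\|_2$ and $\|P^\ast v_k\|_2\to\|P^\ast v\|_2$ by the same continuity, while $\|v_k\|\le\|v\|$ and $\|u_k\|\le\|u\|$ keep the other factors controlled; passing to the limit in the inequality for the $u_k,v_k$ yields \eqref{eq:Fisher-bound} in full generality.

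Finally, the ``in particular'' statement is immediate: square \eqref{eq:Fisher-bound}, sum over $i=1,\dots,n$, and use $\sum_{i=1}^n\|\xi_i\|_2^2=\Phi^\ast(X_1,\dots,X_n)$ from the definition of the non-microstates free Fisher information, pulling the common factor $16\big(\|Pu\|_2\|v\|+\|u\|\|P^\ast v\|_2\big)^2\|Q\|_\pi^2$ out of the sum.
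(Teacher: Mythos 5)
Your plan is the paper's: establish the bound on elementary tensors $Q_1\otimes Q_2$ for polynomial $u,v$, then pass to general $Q$ via the definition of $\|\cdot\|_\pi$ and to general $u,v$ via Kaplansky. The step you flagged as the main obstacle does close, but after a correction: with $\langle A,B\rangle=(\tau\otimes\tau)(AB^*)$, moving the right factor across the pairing produces an adjoint, so the identity should read
\[
\langle Y_2^*(\partial_iP)Y_1,\,Q_1\otimes Q_2\rangle=\langle\partial_iP,\,(Y_2Q_1)\otimes(Q_2Y_1^*)\rangle=\langle P,\,\partial_i^*(Y_2Q_1\otimes Q_2Y_1^*)\rangle,
\]
with $Y_1^*$, not $Y_1$, in the second leg. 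With that fix the regrouping you propose works: set $A=Y_2Q_1$, $B=Q_2Y_1^*$ in $\partial_i^*(A\otimes B)=\partial_i^*(A\otimes1)B-A(\tau\otimes\id)(\partial_iB)$; shuffling $B$ to the $P$-side of the first term gives $\langle PY_1Q_2^*,\,\partial_i^*(Y_2Q_1\otimes1)\rangle$, bounded by $\|\xi_i\|_2\|PY_1\|_2\|Y_2\|\|Q_1\|\|Q_2\|$ via \eqref{conjugate_norms}, and shuffling $A$ to the $P$-side of the second term gives $\langle Q_1^*Y_2^*P,\,(\tau\otimes\id)(\partial_i(Q_2Y_1^*))\rangle$, bounded by $2\|\xi_i\|_2\|P^*Y_2\|_2\|Y_1\|\|Q_1\|\|Q_2\|$ via \eqref{Dab-estimates-2}. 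This in fact yields the inequality with constant $2$ in place of $4$; the constant $4$ is not forced here. The paper's decomposition is mildly different: it pairs $Pu$ with $\partial_i^*(vQ_1\otimes Q_2)$ and applies Leibniz to $\partial_i(Pu)=(\partial_iP)u+P(\partial_iu)$, isolating the target term and a correction involving $\partial_iu$, which then calls on Corollary \ref{key-estimates}(a) and (b) and gives $\max(3,4)=4$. The two routes are equivalent up to bookkeeping, with yours marginally tighter.

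One caution on the approximation step: do not reduce to self-adjoint $u,v$ ``by sesquilinearity.'' The right-hand side of \eqref{eq:Fisher-bound} is not sesquilinear in $u,v$, and for the self-adjoint parts $u_j$ of $u$ the quantity $\|Pu_j\|_2$ is not controlled by $\|Pu\|_2$ alone (it involves $\|Pu^*\|_2$), so this reduction changes the shape of the bound, not merely the constant. Instead apply Kaplansky to general $u,v$ directly: you only need conditions (ii) and (iii) of Lemma \ref{approximation}, and those hold for arbitrary $w\in\vN(X_1,\dots,X_n)$; the self-adjointness of $w$ in that lemma is used only to obtain (i), which you don't need. Your convergence argument for the left-hand side, and the ``in particular'' step, are fine.
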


\begin{proof}
Firstly, let us consider $u,v\in \C\langle X_1,\dots,X_n\rangle$. In this particular case, we observe that for all $Q_1,Q_2\in\C\langle X_1,\dots,X_n\rangle$
\begin{align*}
\langle Pu, & \partial_i^\ast(vQ_1 \otimes Q_2)\rangle\\
&= \langle \partial_i (Pu), vQ_1 \otimes Q_2\rangle\\
&= \langle (\partial_i P) u, vQ_1 \otimes Q_2\rangle + \langle P (\partial_i u), vQ_1 \otimes Q_2\rangle\\
&= \langle v^\ast (\partial_i P) u, Q_1 \otimes Q_2\rangle + \langle (\partial_i u) Q_2^\ast , P^*vQ_1 \otimes 1\rangle\\
&= \langle v^\ast (\partial_i P) u, Q_1 \otimes Q_2\rangle + \langle (\id\otimes\tau)((\partial_i u) Q_2^\ast) , P^*vQ_1\rangle
\end{align*}
holds. Rearranging the above equation yields, by using Corollary \ref{key-estimates},
\begin{align*}
|\langle v^\ast &(\partial_i P) u, Q_1 \otimes Q_2\rangle|\\
&\leq |\langle Pu, \partial_i^\ast(vQ_1 \otimes Q_2)\rangle| + |\langle (\id\otimes\tau)((\partial_i u) Q_2^\ast) , P^\ast vQ_1\rangle|\\
&\leq \|Pu\|_2 \|\partial_i^\ast(vQ_1 \otimes Q_2)\|_2 + \|(\id\otimes\tau)((\partial_i u) Q^*_2)\|_2 \|P^\ast vQ_1\|_2\\
&\leq 3 \|\xi_i\|_2 \|Pu\|_2 \|v\| \|Q_1\| \|Q_2\| + 4 \|\xi_i\|_2 \|u\| \|P^\ast v\|_2 \|Q_1\|\|Q^*_2\|\\
&\leq 4 \|\xi_i\|_2 \bigl(\|Pu\|_2 \|v\| + \|u\| \|P^\ast v\|_2\bigr) \|Q_1\| \|Q_2\|.
\end{align*}
Due to Lemma \ref{approximation}, the obtained inequality
$$|\langle v^\ast (\partial_i P) u, Q_1 \otimes Q_2\rangle|\leq 4 \|\xi_i\|_2 
\bigl(\|Pu\|_2 \|v\| + \|u\| \|P^\ast v\|_2\bigr) \|Q_1\| \|Q_2\|$$
also holds for all $u,v\in\vN(X_1,\dots,X_n)$. Moreover, it follows then for all $Q\in \C\langle X_1,\dots,X_n\rangle^{\otimes 2}$ that
$$|\langle v^\ast (\partial_i P) u, Q \rangle|\leq 4 \|\xi_i\|_2 \bigl(\|Pu\|_2 \|v\| + \|u\| \|P^\ast v\|_2\bigr) \|Q\|_\pi.$$
This shows the first part of the statement. The second inequality follows by taking squares on both sides and summing over $i=1,\dots,n$. Since $\Phi^\ast(X_1,\dots,X_n) = \sum^n_{i=1}\|\xi_i\|^2_2$, this concludes the proof.
\end{proof}

We want to stress that Proposition \ref{Fisher-bound} is in fact a quantitative version of a previous result of ours that allowed us to give in an earlier version of the present paper a proof of Theorem \ref{main2} under the weaker assumption of finite non-microstates Fisher information. This was based on the following corollary.

\begin{corollary}
Let $(M,\tau)$ be a tracial $W^\ast$-probability space and let self-adjoint elements $X_1,\dots,X_n \in M$ be given, such that
$$\Phi^\ast(X_1,\dots,X_n) <\infty$$
holds. We consider $P\in\C\langle Z_1,\dots,Z_n\rangle$. Then, for arbitrary $u,v\in\vN(X_1,\dots,X_n)$, the following implication holds true:
$$P(X) u = 0 \quad\text{and}\quad P(X)^\ast v = 0 \implies \forall i=1,\dots,n:\ v^\ast (\partial_i P)(X) u  = 0,$$
where we abbreviate $X=(X_1,\dots,X_n)$.
\end{corollary}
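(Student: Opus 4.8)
The plan is to deduce this corollary directly from the quantitative estimate \eqref{eq:Fisher-bound} in Proposition \ref{Fisher-bound}. Suppose that $u,v\in\vN(X_1,\dots,X_n)$ satisfy $P(X)u=0$ and $P(X)^\ast v=0$, where for brevity I write $P=P(X)\in\C\langle X_1,\dots,X_n\rangle$ (using part (a) of Theorem \ref{main1}, or rather the standing assumption of this subsection, so that the evaluation homomorphism is an isomorphism and $P$ may be viewed inside the polynomial algebra). Then $\|Pu\|_2 = 0$ and $\|P^\ast v\|_2 = 0$, so the right-hand side of \eqref{eq:Fisher-bound} vanishes identically, giving
$$\langle v^\ast (\partial_i P) u, Q\rangle = 0 \qquad\text{for all } Q\in\C\langle X_1,\dots,X_n\rangle^{\otimes 2} \text{ and all } i=1,\dots,n.$$

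Next I would upgrade this to the conclusion $v^\ast(\partial_i P) u = 0$ in $L^2(X_1,\dots,X_n,\tau\otimes\tau)$. A priori the vanishing of the inner product only holds against elements $Q$ of the algebraic tensor product $\C\langle X_1,\dots,X_n\rangle^{\otimes 2}$, not against all of the Hilbert space. But $\C\langle X_1,\dots,X_n\rangle^{\otimes 2}$ is by construction dense in $L^2(X_1,\dots,X_n,\tau\otimes\tau)$, and $v^\ast(\partial_i P)u$ lies in this $L^2$-space (it is an $L^2$-limit, or in fact one should check it lies in the appropriate completion; since $v,u$ are in the von Neumann algebra and $\partial_i P$ is a finite sum of elementary tensors of polynomials, $v^\ast(\partial_i P)u$ is a genuine element of $L^2(M\overline\otimes M,\tau\otimes\tau)$, and it is approximated in $\|\cdot\|_2$ by elements of the algebraic tensor product upon approximating $u,v$ via Lemma \ref{approximation}). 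Hence a vector orthogonal to a dense subspace is zero, so $v^\ast(\partial_i P)(X)u = 0$ for each $i$, which is the claim.

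I would present it as a short two-sentence proof: invoke \eqref{eq:Fisher-bound} with the given $u,v$, observe the bound forces all the pairings $\langle v^\ast(\partial_i P)u, Q\rangle$ to vanish, and then conclude by density of $\C\langle X_1,\dots,X_n\rangle^{\otimes 2}$ in $L^2(X_1,\dots,X_n,\tau\otimes\tau)$ together with faithfulness of $\tau\otimes\tau$. The only mild obstacle is the bookkeeping around which Hilbert space $v^\ast(\partial_i P)u$ actually lives in and why the density argument applies to it; but this is routine given that $u,v\in\vN(X_1,\dots,X_n)\subseteq M$ act boundedly and $\partial_i P\in\C\langle X_1,\dots,X_n\rangle^{\otimes 2}$, so no real difficulty arises. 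No semicircular perturbation or entropy-dimension machinery is needed here — that enters only later, in the passage from finite $\Phi^\ast$ to the hypothesis $\delta^\ast = n$ of Theorem \ref{main2}.
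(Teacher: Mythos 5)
Your proposal is correct and takes essentially the same route as the paper: feed $P(X)u=0$ and $P(X)^\ast v=0$ into the bound \eqref{eq:Fisher-bound} from Proposition \ref{Fisher-bound}, observe that the pairings $\langle v^\ast(\partial_i P)u, Q\rangle$ all vanish, and conclude by density of $\C\langle X_1,\dots,X_n\rangle^{\otimes 2}$. Your extra care in identifying the ambient space as $L^2(X_1,\dots,X_n,\tau\otimes\tau) \subset L^2(M\overline\otimes M,\tau\otimes\tau)$ and in noting that $v^\ast(\partial_i P)u$ genuinely lives there is a small improvement over the paper's slightly loose phrasing (''dense in $L^2(M,\tau)$''), but it is not a different proof.
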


\begin{proof}
The inequality \eqref{eq:Fisher-bound}, which was stated in Proposition \ref{Fisher-bound}, immediately implies that $\langle v^\ast (\partial_i P) u, Q\rangle = 0$ for all $Q\in \C\langle X_1,\dots,X_n\rangle^{\otimes 2}$. Hence, since $\C\langle X_1,\dots,X_n\rangle^{\otimes 2}$ is dense in $L^2(M,\tau)$ with respect to $\|\cdot\|_2$, this yields $v^\ast (\partial_i P)(X) u  = 0$ as claimed. 
\end{proof}

Thus, readers interested in the proof of Theorem \ref{main2} only under the stronger assumption $\Phi^\ast(X_1,\dots,X_n)<\infty$ may skip Subsection \ref{subsec:entropy_dimension} and proceed directly to Subsection \ref{subsec:proof_thm}, since the final step in the proof of Theorem \ref{main2} will only need the above reduction argument.

\subsection{Treating the case of full entropy dimension via semicircular perturbations}
\label{subsec:entropy_dimension}

Since the non-microstates free entropy dimension $\delta^\ast(X_1,\dots,X_n)$ and its variant $\hat\delta^\star(X_1,\dots,X_n)$ are both determined in a more or less explicit way by the behavior of the function
$$t \mapsto \Phi^\ast(X_1+\sqrt{t} S_1,\dots, X_n+\sqrt{t} S_n)$$
as $t\searrow0$, one is tempted to apply the results obtained in Proposition \ref{Fisher-bound} to the semicircular perturbation $(X_1+\sqrt{t} S_1,\dots, X_n+\sqrt{t} S_n)$. In fact, in this way the quantity
\begin{align*}
\alpha(X_1,\dots,X_n) & := n - \hat{\delta}^\star(X_1,\dots,X_n)\\
                      & = \liminf_{t\searrow0} t \Phi^\ast(X_1+\sqrt{t}S_1,\dots,X_n+\sqrt{t}S_n),
\end{align*}
which also appeared in \cite[Section 4]{Connes-Shlyakhtenko}, emerges naturally from the inequality given in Proposition \ref{Fisher-bound} and allows us to study its influence.

It is therefore not surprising that some of the technical arguments that we will use below for dealing with semicircular perturbations are similar to \cite[Section 4]{Connes-Shlyakhtenko}. However, the proof itself is conceptually independent and follows a different strategy, since it is a straightforward continuation of Proposition \ref{Fisher-bound} and hence relies on the inequalities due to Dabrowski \cite{Dab-Gamma}, which we recalled in Lemma \ref{Dab-estimates}.

More precisely, we will show the following.

\begin{proposition}\label{Fisher-growth}
Let $(M,\tau)$ be a tracial $W^\ast$-probability space and let self-adjoint elements $X_1,\dots,X_n \in M$ be given. Moreover, let $P\in\C\langle Z_1,\dots,Z_n\rangle$ be a non-commutative polynomial for which there are elements $u,v\in\vN(X_1,\dots,X_n)$ such that
$$P(X_1,\dots,X_n)u=0 \qquad\text{and}\qquad P(X_1,\dots,X_n)^\ast v=0.$$
Then, for all $Q\in\C\langle X_1,\dots,X_n\rangle^{\otimes 2}$,
\begin{align*}
\lefteqn{\sum^n_{i=1} |\langle v^\ast (\partial_i P)(X) u, Q\rangle|^2}\\
& \qquad \leq 16 \big(\|(\partial P)(X)u\|_2 \|v\| + \|u\| \|(\partial P)(X)^\ast v\|_2\big)^2 \alpha(X) \|Q\|_\pi^2,
\end{align*}
where we abbreviate $X=(X_1,\dots,X_n)$.
\end{proposition}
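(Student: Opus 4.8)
The plan is to apply Proposition~\ref{Fisher-bound} to the semicircular perturbations $X^{(t)}:=(X_1+\sqrt{t}S_1,\dots,X_n+\sqrt{t}S_n)$ and then let $t\searrow0$ along a suitable sequence. By \cite[Corollary 3.9]{Voi-Entropy-V} we have $\Phi^\ast(X^{(t)})<\infty$ for every $t>0$, so Proposition~\ref{Fisher-bound} applies to each $X^{(t)}$: writing $\partial_i^{(t)}$ for the derivations attached to the generators $X^{(t)}$ and $\xi_i^{(t)}$ for the corresponding conjugate variables, we obtain, for every $Q\in\C\langle X_1^{(t)},\dots,X_n^{(t)}\rangle^{\otimes2}$ and all $\tilde u,\tilde v$ in the relevant von Neumann algebra,
$$\sum_{i=1}^n |\langle \tilde v^\ast (\partial_i^{(t)} P)(X^{(t)}) \tilde u, Q\rangle|^2 \leq 16\bigl(\|P(X^{(t)})\tilde u\|_2 \|\tilde v\| + \|\tilde u\| \|P(X^{(t)})^\ast \tilde v\|_2\bigr)^2 \Phi^\ast(X^{(t)}) \|Q\|_{\pi,X^{(t)}}^2.$$
Multiplying by $t$ turns the factor $t\Phi^\ast(X^{(t)})$ into something whose liminf is exactly $\alpha(X)$.

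The first substantive step is to control the $t$-dependent quantities as $t\to0$. Here $u,v$ are the \emph{fixed} elements from the hypothesis, lying in $\vN(X_1,\dots,X_n)$, not in $\vN(X^{(t)})$; but since $\vN(X_1,\dots,X_n)\subseteq\vN(X^{(t)})$ (as the $S_i$ are freely adjoined), $u$ and $v$ do belong to $\vN(X^{(t)})$, so the inequality may be applied with $\tilde u=u$, $\tilde v=v$. For the norm $\|Q\|_{\pi,X^{(t)}}$ we fix a decomposition $Q=\sum_k Q_{k,1}\otimes Q_{k,2}$ of $Q$ as a polynomial in the $X_i$; viewing the $Q_{k,\ell}$ as the \emph{same} polynomials evaluated at $X^{(t)}$, we get $\|Q\|_{\pi,X^{(t)}}\leq \sum_k \|Q_{k,1}(X^{(t)})\|\,\|Q_{k,2}(X^{(t)})\|\to \sum_k\|Q_{k,1}(X)\|\,\|Q_{k,2}(X)\|$ as $t\to0$, by norm-continuity of polynomial evaluation; taking the infimum over decompositions of $Q$ afterward gives $\limsup_{t\to0}\|Q\|_{\pi,X^{(t)}}\leq\|Q\|_{\pi,X}$. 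Similarly $\|P(X^{(t)})u\|_2\to\|P(X)u\|_2=0$ and $\|P(X^{(t)})^\ast v\|_2\to\|P(X)^\ast v\|_2=0$ by $L^2$-continuity of evaluation (the $X^{(t)}$ are uniformly bounded in $M$ and converge in $\|\cdot\|_2$, hence all moments converge), so the bracket $\|P(X^{(t)})u\|_2\|v\|+\|u\|\|P(X^{(t)})^\ast v\|_2\to0$. This looks like it would kill everything — but that is wrong, because we want the left-hand side evaluated at the \emph{limiting} derivations $\partial_iP$ at $X$, and the point of the $t\Phi^\ast(X^{(t)})$ factor is precisely to absorb the blow-up; so we must instead not let the bracket go to zero but rather keep track of the derivative-norms.

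The correct bookkeeping: rewrite the bracket using $P(X^{(t)})=P(X)+\bigl(P(X^{(t)})-P(X)\bigr)$ and expand $P(X^{(t)})-P(X)$ to first order in the perturbation. Since each $X_i^{(t)}=X_i+\sqrt t S_i$, a Taylor-type expansion of the polynomial $P$ gives $P(X^{(t)})-P(X)=\sqrt t\sum_{i=1}^n (\partial_i P)(X)\sharp(1\otimes S_i\otimes 1)+O(t)$ in operator norm, i.e. $\|P(X^{(t)})u - P(X)u\|_2 \leq \sqrt t \cdot C\cdot\|(\partial P)(X)u\|_2$-type bounds up to lower order, where the appearance of $(\partial_i P)(X)u$ is forced by freeness of $S_i$ from everything else (so that $\|(\partial_i P)(X)\sharp(1\otimes S_i\otimes1)u\|_2$ is comparable to $\|(\partial_i P)(X)u\|_2$ times $\|S_i\|_2$). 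Combined with $P(X)u=0$ and $P(X)^\ast v=0$, we get
$$\|P(X^{(t)})u\|_2 \leq C\sqrt t\,\|(\partial P)(X)u\|_2 + o(\sqrt t), \qquad \|P(X^{(t)})^\ast v\|_2 \leq C\sqrt t\,\|(\partial P)(X)^\ast v\|_2 + o(\sqrt t).$$
Feeding this into the squared bracket produces a factor $t$, which multiplies $\Phi^\ast(X^{(t)})$ to give $t\Phi^\ast(X^{(t)})$, whose liminf is $\alpha(X)$. Finally one checks $\langle v^\ast(\partial_i^{(t)}P)(X^{(t)})u,Q\rangle\to\langle v^\ast(\partial_iP)(X)u,Q\rangle$ as $t\to0$ — again by continuity of the (polynomially expressed) derivative evaluation in $\|\cdot\|_2$ and of the inner product — and passes to the liminf on both sides of the multiplied inequality. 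The constant $C$ should come out to be absorbable into the stated constant, with the $\sharp(1\otimes S_i\otimes1)$ expansion giving exactly the $(\partial P)(X)$-norms and hence the clean bound with $16$ and $\alpha(X)$.

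The main obstacle I expect is making the perturbative expansion $P(X^{(t)})=P(X)+\sqrt t\sum_i(\partial_iP)(X)\sharp(1\otimes S_i\otimes1)+\dots$ rigorous \emph{and} sharp enough that the first-order term is exactly governed by $\|(\partial_iP)(X)u\|_2$ rather than by some cruder $\|(\partial_iP)(X)\|$: this needs the freeness of $\{S_i\}$ from $\vN(X_1,\dots,X_n)$ to conclude $\|T\sharp(1\otimes S_i\otimes1)\cdot u\|_2 = \|T\cdot(1\otimes S_i)\cdot(u\otimes 1)\|_2$-style identities, i.e. that multiplication by $S_i$ on the appropriate side acts isometrically-up-to-$\|S_i\|_2$ on the relevant $L^2$-subspace because $S_i$ is free from the coefficients and from $u$. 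A secondary technical nuisance is the higher-order remainder $O(t)$: one must verify it is genuinely $o(\sqrt t)$ relative to the quantities kept, uniformly enough, and that it too can be bounded by derivative-type norms times $u$ (so that if $(\partial P)(X)u=0$ the whole thing collapses, which is consistent with the reduction argument to come in Subsection~\ref{subsec:proof_thm}). Everything else — the continuity statements for evaluation of polynomials and their derivatives in operator norm and in $\|\cdot\|_2$, and the semicontinuity of $\|\cdot\|_\pi$ under the perturbation — is routine.
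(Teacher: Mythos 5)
The overall route matches the paper's: apply Proposition~\ref{Fisher-bound} to the semicircular perturbation $X^t=(X_1+\sqrt t S_1,\dots,X_n+\sqrt t S_n)$, multiply by $t$, use the $\sqrt t$-scaling of $P(X^t)u$ and $P(X^t)^\ast v$ to produce derivative norms (with the isometry/orthogonality argument from freeness giving exactly $\|(\partial P)(X)u\|_2$ and $\|(\partial P)(X)^\ast v\|_2$, so the constant is $1$), prove $\limsup_t\|Q(X^t)\|_{\pi,X^t}\le\|Q(X)\|_{\pi,X}$, and let $t\searrow0$ to recover $\alpha(X)$. But there is a genuine gap right at the start of the bookkeeping.

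You assert that $\vN(X_1,\dots,X_n)\subseteq\vN(X^t)$ ``as the $S_i$ are freely adjoined,'' and use this to plug the fixed $u,v$ directly into Proposition~\ref{Fisher-bound} applied to the tuple $X^t$. This containment is false in general. For $n=1$ with $X$ a nontrivial projection and $S$ a free semicircular, $\vN(X+\sqrt t S)$ is abelian (generated by a single self-adjoint element), whereas $\vN(X,S)$ is a II$_1$ factor; since $X$ does not commute with $X+\sqrt t S$, we do not have $X\in\vN(X+\sqrt t S)$. Consequently, Proposition~\ref{Fisher-bound}, whose hypotheses require $u,v\in\vN(X^t)$ (its proof relies on Kaplansky approximation of $u,v$ by polynomials in the $X_i^t$ and on Voiculescu's formula for $\partial_i^\ast$ on $\C\langle X_1^t,\dots,X_n^t\rangle^{\otimes2}$), cannot be applied with your $u,v$. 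The paper's fix is to introduce the trace-preserving conditional expectations $\E_t:M\to N_t:=\vN(X^t)$ and set $u_t:=\E_t[u]$, $v_t:=\E_t[v]\in N_t$; then $\|P(X^t)u_t\|_2=\|\E_t[P(X^t)u]\|_2\le\|P(X^t)u\|_2$ (and likewise with $v$), and in the final step $\langle v_t^\ast(\partial_i P)(X^t)u_t,Q(X^t)\rangle=\langle v^\ast(\partial_i P)(X^t)u,Q(X^t)\rangle$ by the module property of $\E_t$, since $(\partial_i P)(X^t)$ and $Q(X^t)$ lie in $N_t\bar\otimes N_t$. With this change made, the rest of your outline (including the upper semicontinuity of $\|\cdot\|_\pi$ and the $\liminf$ argument) goes through essentially as in the paper. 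One smaller point worth tightening: you need the exact equality $\lim_{t\searrow0}\frac1{\sqrt t}\|P(X^t)u\|_2=\|(\partial P)(X)u\|_2$, not merely $\lesssim$, to land on the stated constant $16=4^2$ inherited from Proposition~\ref{Fisher-bound}; the orthogonality of the subspaces $\operatorname{span}\{a_1 S_j a_2:a_1,a_2\in N_0\}$ and the isometry $U\mapsto U\sharp S_j$ give exactly this, so your sketch is salvageable once you make that precise.
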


Here, we use the notation $\partial P$ for the gradient $(\partial_1 P, \dots, \partial_n P)$ of a polynomial $P\in\C\langle Z_1,\dots,Z_n\rangle$. Evaluation $(\partial P)(X)$, taking adjoints $(\partial P)(X)^\ast$, and multiplication by elements from $M$ like in $(\partial P)(X)u$ and $(\partial P)(X)^\ast v$ are then defined component-wise.

Furthermore, we point out that the space $L^2(M,\tau)^n$ becomes a Hilbert space in the obvious way. We denote its induced norm also by $\|\cdot\|_2$.

\begin{proof}[Proof of Proposition \ref{Fisher-growth}]
(i) Without any restriction, we may assume that our underlying $W^\ast$-probability space $(M,\tau)$ contains $n$ normalized semicircular elements $S_1,\dots,S_n$ such that
$$\{X_1,\dots,X_n\},\{S_1\},\dots,\{S_n\}$$
are freely independent. We define variables
$$X_j^t := X_j + \sqrt{t} S_j \qquad\text{for $t\geq0$ and $j=1,\dots,n$}$$
and denote by $N_t := \vN(X_1^t,\dots,X_n^t)$ the von Neumann algebras they generate. In particular, $N_0$ is the von Neumann algebra generated by $X_1,\dots,X_n$. We abbreviate $X^t=(X_1^t,\dots,X_n^t)$ for $t\geq 0$, so that in particular $X^0=X=(X_1,\dots,X_n)$.

Since $N_t$, for each $t\geq0$, is a von Neumann subalgebra of $M$, we may consider the trace-preserving conditional expectation $\E_t$ from $M$ onto $N_t$. Finally, we introduce $u_t := \E_t[u] \in N_t$ and $v_t := \E_t[v]\in N_t$.

It follows then that $P(X^t) u_t = P(X^t) \E_t[u] = \E_t[P(X^t)u]$ and hence
$$\|P(X^t) u_t\|_2 = \|\E_t[P(X^t)u]\|_2 \leq \|P(X^t)u\|_2.$$
Now, since $t\mapsto P(X^t)u$ is a polynomial in $\sqrt{t}$, which vanishes at $t=0$, we may observe that
$$\lim_{t\searrow 0} \frac{1}{\sqrt{t}} P(X^t)u = \sum^n_{i=1} (\partial_i P)(X)u \sharp S_i.$$
Since the linear subspaces
$$\operatorname{span}\{a_1 S_j a_2|\ a_1,a_2 \in N_0\},\qquad j=1,\dots,n,$$
of $L^2(M,\tau)$ are pairwise orthogonal and since the mapping
$$L^2(N_0,\tau) \otimes L^2(N_0,\tau) \to L^2(M,\tau),\ U \mapsto U \sharp S_j$$
is in fact an isometry, which are both consequences of the assumed freeness of $\{X_1,\dots,X_n\},\{S_1\},\dots,\{S_n\}$, it follows that
$$\lim_{t\searrow 0} \frac{1}{\sqrt{t}} \|P(X^t)u\|_2 = \bigg(\sum^n_{i=1} \|(\partial_i P)(X)u\|_2^2\bigg)^{1/2} = \|(\partial P)(X)u\|_2$$
Similarly, we may deduce that
$$\lim_{t\searrow 0} \frac{1}{\sqrt{t}} \|P(X^t)^\ast v\|_2 = \bigg(\sum^n_{i=1} \|(\partial_i P)(X)^\ast v\|_2^2\bigg)^{1/2} =  \|(\partial P)(X)^\ast v\|_2.$$

(ii) We note that for each $Q\in\C\langle Z_1,\dots,Z_n\rangle^{\otimes 2}$
$$\limsup_{t\searrow 0} \|Q(X^t)\|_{\pi,X^t} \leq \|Q(X)\|_{\pi,X}.$$
Indeed, for any given $Q\in\C\langle Z_1,\dots,Z_n\rangle^{\otimes 2}$, we may consider an arbitrary decomposition
$$Q = \sum^N_{k=1} Q_{k,1} \otimes Q_{k,2}.$$
For $k=1,\dots,N$, we may write
$$Q_{k,1}(X^t) \otimes Q_{k,2}(X^t) = Q_{k,1}(X) \otimes Q_{k,2}(X) + \sum^{d_k}_{l=1} t^{l/2} R_{k,l}$$
for some $d_k\geq1$ with certain elements
$$R_{k,1},\dots,R_{k,{d_k}}\in \C\langle X_1,\dots,X_n,S_1,\dots,S_n\rangle,$$
which are independent of $t$. Since the norm $\|\cdot\|$ on $M\overline{\otimes} M$ is a cross norm, we get
\begin{align*}
\|Q_{k,1}(X^t)\| \|Q_{k,2}(X^t)\| &= \|Q_{k,1}(X^t) \otimes Q_{k,2}(X^t)\|\\
                                  &\leq \|Q_{k,1}(X) \otimes Q_{k,2}(X)\| + \sum^{d_k}_{l=1} t^{l/2} \|R_{k,l}\|\\
                                  &= \|Q_{k,1}(X)\| \|Q_{k,2}(X)\| + \sum^{d_k}_{l=1} t^{l/2} \|R_{k,l}\|
\end{align*}
for all $k=1,\dots,N$ and thus
\begin{align*}
\|Q(X^t)\|_{\pi,X^t} &\leq \sum^N_{k=1} \|Q_{k,1}(X^t)\| \|Q_{k,2}(X^t)\|\\
                     &\leq \sum^N_{k=1} \|Q_{k,1}(X)\| \|Q_{k,2}(X)\| + \sum^N_{k=1} \sum^{d_k}_{l=1} t^{l/2} \|R_{k,l}\|,
\end{align*}
so that
$$\limsup_{t\searrow 0} \|Q(X^t)\|_{\pi,X^t} \leq \sum^N_{k=1} \|Q_{k,1}(X)\| \|Q_{k,2}(X)\|.$$
Since the decomposition of $Q$ was arbitrarily chosen, we get as desired
$$\limsup_{t\searrow 0} \|Q(X^t)\|_{\pi,X^t} \leq \|Q(X)\|_{\pi,X}.$$

(iii) Let $Q\in\C\langle Z_1,\dots,Z_n\rangle^{\otimes 2}$ be given. Since $\Phi^\ast(X^t)<\infty$, we obtain by Proposition \ref{Fisher-bound} that 
\begin{align*}
\lefteqn{\sum^n_{i=1} |\langle v_t^\ast (\partial_i P)(X^t) u_t, Q(X^t)\rangle|^2}\\
 & \quad \leq 16 \big(\|P(X^t) u_t\|_2 \|v_t\| + \|u_t\| \|P(X^t)^\ast v_t\|_2\big)^2 \Phi^\ast(X^t) \|Q(X^t)\|_{\pi,X^t}^2\\
 & \quad \leq 16 \big(\|P(X^t) u\|_2 \|v\| + \|u\| \|P(X^t)^\ast v\|_2\big)^2 \Phi^\ast(X^t) \|Q(X^t)\|_{\pi,X^t}^2\\
 & \quad = 16 \bigg(\frac{1}{\sqrt{t}} \|P(X^t) u\|_2 \|v\| + \|u\| \frac{1}{\sqrt{t}} \|P(X^t)^\ast v\|_2 \bigg)^2 t \Phi^\ast(X^t)  \|Q(X^t)\|_{\pi,X^t}^2,
\end{align*}
so that, since $\displaystyle{\limsup_{t\searrow 0} \|Q(X^t)\|_{\pi,X^t} \leq \|Q(X)\|_{\pi,X}}$ according to (ii),
\begin{align*}
\lefteqn{\liminf_{t\searrow 0}\ \sum^n_{i=1} |\langle v_t^\ast (\partial_i P)(X^t) u_t, Q(X^t)\rangle|^2}\\
 & \qquad \leq 16 \big(\|(\partial P)(X)u\|_2 \|v\| + \|u\| \|(\partial P)(X)^\ast v\|_2\big)^2 \alpha(X) \|Q(X)\|_\pi^2.
\end{align*}

(iv) It remains to show that in fact
$$\liminf_{t\searrow 0}\ \sum^n_{i=1} |\langle v_t^\ast (\partial_i P)(X^t) u_t, Q(X^t)\rangle|^2 = \sum^n_{i=1} |\langle v^\ast (\partial_i P)(X) u, Q(X)\rangle|^2.$$
We first check that
\begin{align*}
\langle v_t^\ast (\partial_i P)(X^t) u_t, Q(X^t)\rangle
&= \langle \E_t[v^\ast] (\partial_i P)(X^t) \E_t[u], Q(X^t)\rangle\\
&= \langle (\E_t \otimes \E_t)[v^\ast (\partial_i P)(X^t) u], Q(X^t)\rangle\\
&= \langle v^\ast (\partial_i P)(X^t) u, Q(X^t)\rangle,
\end{align*}
which means in particular that this expression is actually a complex polynomial in $\sqrt{t}$. This guarantees that
$$\lim_{t\searrow0}\ \langle v_t^\ast (\partial_i P)(X^t) u_t, Q(X^t)\rangle =\langle v^\ast (\partial_i P)(X) u, Q(X)\rangle$$
and finally completes the proof.
\end{proof}

\subsection{Proof of Theorem \ref{main2}}
\label{subsec:proof_thm}

Now, we are prepared to give a proof of Theorem \ref{main2}. Even more, we will do this under the (possibly) weaker assumption that $\hat\delta^\star(X_1,\dots,X_n)=n$.

The main tool is the following corollary of Proposition \ref{Fisher-growth}.

\begin{corollary}\label{reduction-strong}
Let $(M,\tau)$ be a tracial $W^\ast$-probability space and let self-adjoint elements $X_1,\dots,X_n \in M$ be given, such that
$$\hat\delta^\star(X_1,\dots,X_n) = n$$
holds. We consider $P\in\C\langle Z_1,\dots,Z_n\rangle$. Then, for arbitrary $u,v\in\vN(X_1,\dots,X_n)$, the following implication holds true:
$$P(X) u = 0 \quad\text{and}\quad P(X)^\ast v = 0 \implies \forall i=1,\dots,n:\ v^\ast (\partial_i P)(X) u  = 0.$$
\end{corollary}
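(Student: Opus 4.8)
The goal is to upgrade Proposition~\ref{Fisher-growth}, which still carries the factor $\alpha(X) = n - \hat\delta^\star(X_1,\dots,X_n)$ on the right-hand side, into the clean implication of Corollary~\ref{reduction-strong}. The plan is a degree induction on the polynomial $P$. The base case $\deg P = 0$ is trivial: a non-zero constant is invertible in $M$, so $P(X)u = 0$ forces $u = 0$, and then the conclusion $v^\ast(\partial_i P)(X)u = 0$ holds vacuously (and anyway $\partial_i P = 0$ for constant $P$). So assume the statement holds for all polynomials of degree strictly less than $d$, and let $\deg P = d \geq 1$ with $P(X)u = 0$ and $P(X)^\ast v = 0$.

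The key observation is that the hypothesis $\hat\delta^\star(X_1,\dots,X_n) = n$ means exactly $\alpha(X) = 0$, so Proposition~\ref{Fisher-growth} already gives
$$\sum_{i=1}^n |\langle v^\ast(\partial_i P)(X)u, Q\rangle|^2 \leq 16\big(\|(\partial P)(X)u\|_2\|v\| + \|u\|\|(\partial P)(X)^\ast v\|_2\big)^2 \cdot 0 \cdot \|Q\|_\pi^2 = 0$$
for all $Q\in\C\langle X_1,\dots,X_n\rangle^{\otimes 2}$. Wait — this already forces $v^\ast(\partial_i P)(X)u = 0$ directly, since $\C\langle X_1,\dots,X_n\rangle^{\otimes 2}$ is $\|\cdot\|_2$-dense in $L^2(X_1,\dots,X_n,\tau\otimes\tau)$. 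So in fact no induction is needed at all: the corollary is an immediate consequence of Proposition~\ref{Fisher-growth} together with the identity $\hat\delta^\star = n \iff \alpha(X) = 0$, which itself is just unwinding the definition $\alpha(X_1,\dots,X_n) = n - \hat\delta^\star(X_1,\dots,X_n) = \liminf_{t\searrow 0} t\,\Phi^\ast(X_1+\sqrt t S_1,\dots,X_n+\sqrt t S_n)$ stated in Subsection~\ref{subsec:entropy_dimension}.

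Concretely, I would write: Fix $P\in\C\langle Z_1,\dots,Z_n\rangle$ and $u,v\in\vN(X_1,\dots,X_n)$ with $P(X)u = 0$ and $P(X)^\ast v = 0$. Since $\hat\delta^\star(X_1,\dots,X_n) = n$, we have $\alpha(X) = n - \hat\delta^\star(X_1,\dots,X_n) = 0$. Applying Proposition~\ref{Fisher-growth} we obtain, for every $Q\in\C\langle X_1,\dots,X_n\rangle^{\otimes 2}$ and every $i = 1,\dots,n$,
$$|\langle v^\ast(\partial_i P)(X)u, Q\rangle|^2 \leq \sum_{i=1}^n |\langle v^\ast(\partial_i P)(X)u, Q\rangle|^2 \leq 16\big(\cdots\big)^2 \alpha(X)\|Q\|_\pi^2 = 0.$$
Hence $\langle v^\ast(\partial_i P)(X)u, Q\rangle = 0$ for all $Q$ in the algebraic tensor product $\C\langle X_1,\dots,X_n\rangle^{\otimes 2}$, which is dense in $L^2(X_1,\dots,X_n,\tau\otimes\tau)$ with respect to $\|\cdot\|_2$; therefore $v^\ast(\partial_i P)(X)u = 0$ for each $i$, as claimed.

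The only point requiring a word of care — and the natural place for a referee to object — is the density statement: one must know that $v^\ast(\partial_i P)(X)u$ genuinely lies in the Hilbert space $L^2(M,\tau\otimes\tau)$ (equivalently $L^2(M\overline\otimes M)$) against which the $Q$'s are being tested, so that vanishing of all inner products forces the vector itself to vanish. This is automatic since $(\partial_i P)(X)\in\C\langle X_1,\dots,X_n\rangle^{\otimes 2}\subseteq M\overline\otimes M$ and $u,v\in M$, so $v^\ast(\partial_i P)(X)u\in M\overline\otimes M\subseteq L^2(M\overline\otimes M,\tau\otimes\tau)$, and the algebraic tensor product of $\C\langle X_1,\dots,X_n\rangle$ with itself is dense there by construction of the $L^2$-space. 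No genuine obstacle remains; the substance of the argument is entirely contained in Proposition~\ref{Fisher-growth}, and this corollary is the routine passage from the quantitative inequality to the qualitative kernel statement in the extremal case $\alpha(X) = 0$.
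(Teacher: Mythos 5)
Your final argument is correct and is essentially the paper's own proof: note that $\hat\delta^\star(X_1,\dots,X_n)=n$ is equivalent to $\alpha(X_1,\dots,X_n)=0$, feed this into Proposition~\ref{Fisher-growth} to get vanishing of all the pairings, and conclude by density of $\C\langle X_1,\dots,X_n\rangle^{\otimes 2}$ in the relevant $L^2$-space. The detour into a degree induction at the start is unnecessary, as you yourself noticed mid-argument, and can simply be deleted; your closing remark that the element $v^\ast(\partial_i P)(X)u$ lives in $L^2(M\overline\otimes M,\tau\otimes\tau)$ (not $L^2(M,\tau)$) is a small but genuine improvement in precision over the paper's phrasing.
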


\begin{proof}
Note that our assumption $\hat\delta^\star(X_1,\dots,X_n) = n$ is equivalent to the fact that $\alpha(X_1,\dots,X_n)=0$.
If $P(X) u = 0$ and $P(X)^\ast v = 0$, then Proposition \ref{Fisher-growth} yields
$$\sum^n_{i=1} |\langle v^\ast (\partial_i P)(X) u, Q\rangle|^2 = 0$$
for all $Q\in\C\langle X_1,\dots,X_n\rangle^{\otimes 2}$. Since $\C\langle X_1,\dots,X_n\rangle^{\otimes 2}$ is by definition dense in $L^2(M,\tau)$ with respect to $\|\cdot\|_2$, we conclude that  $v^\ast (\partial_i P)(X) u = 0$ for $i=1,\dots,n$.
\end{proof}

\begin{remark}
Let $P\in\C\langle X_1,\dots,X_n\rangle$ be given and assume that there are $u=u^\ast,v=v^\ast \in\vN(X_1,\dots,X_n)$ such that $Pu=P^\ast v=0$ holds. Then, according to Corollary \ref{reduction-strong}, we know that $v\otimes 1 (\partial_j P) 1 \otimes u = 0$ for any $j=1,\dots,n$. If we replace now $P$ by $P^\ast$, the statement of Corollary \ref{reduction-strong} also gives $u\otimes 1(\partial_jP^\ast)1\otimes v=0$ for $j=1,\dots,n$. But we want to point out that this does not lead to any new information. Indeed, if we take adjoints in the initial statement
$$v\otimes 1(\partial_j P)1\otimes u = 0,$$
we get
$$1\otimes u(\partial_j P)^\ast v \otimes 1 = 0.$$
Then, if we apply the flip $\sigma: M \otimes M \to M \otimes M$, i.e. the $\ast$-homomorphism induced by $\sigma(a_1 \otimes a_2) := a_2 \otimes a_1$, it follows that
$$u\otimes 1\sigma((\partial_j P)^\ast)1\otimes v = 0.$$
An easy calculation on monomials shows $\sigma((\partial_j P)^\ast) = \partial_j P^\ast$, such that the above result reduces exactly to the statement obtained by replacing $P$ with $P^\ast$.
\end{remark}

Before doing the final step, we first want to test in two examples how strong the result in Corollary \ref{reduction-strong} is.

\begin{example}
For the self-adjoint polynomial $P=X_1X_2X_1$, we calculate $\partial_2 P = X_1 \otimes X_1$, such that $Pw=0$ implies according to Corollary \ref{reduction-strong} that $wX_1 \otimes X_1w = 0$ and therefore $X_1w=0$ holds.

Applying now Corollary \ref{reduction-strong} once again with respect to $\partial_1$, we end up with $w\otimes w=0$, such that $w=0$ follows.
\end{example}

\begin{example}
Take $P=X_1X_2+X_2X_1$. We have
\begin{align*}
\partial_1 P &= 1 \otimes X_2 + X_2 \otimes 1,\\
\partial_2 P &= X_1 \otimes 1 + 1 \otimes X_1
\end{align*}
and thus according to Corollary \ref{reduction-strong}
\begin{align*}
(X_2 w)^\ast (X_2 w) &= m_{X_2}( w \otimes 1 (\partial_1 P) 1 \otimes w) = 0,\\
(X_1 w)^\ast (X_1 w) &= m_{X_1}( w \otimes 1 (\partial_2 P) 1 \otimes w) = 0.
\end{align*}
We conclude $X_1w=X_2w=0$, from which we may deduce like above by a second application of Corollary \ref{reduction-strong} that $w=0$.
\end{example}

Although the above examples might give the feeling that Corollary \ref{reduction-strong} is strong enough to allow directly a successive reduction of any polynomial, the needed algebraic manipulations turn out to be obscure in general; a skeptical reader might convince himself by having a try at the polynomial $P=X_1X_2X_3+X_3X_2X_1$, for instance. 

Moreover, in contrast to Theorem \ref{main2}, any such reduction argument that is based only on Corollary \ref{reduction-strong} would need a symmetric starting condition. Fortunately, in our situation, we can go around these complications, since we can use the following well-known general lemma, which is an easy consequence of the polar decomposition and encodes the additional information that our statement is formulated in a tracial setting. 

\begin{lemma}\label{key-lemma}
Let $x$ be an element of any tracial $W^\ast$-probability space $(M,\tau)$ over some complex Hilbert space $H$. Let $p_{\ker(x)}$ and $p_{\ker(x^\ast)}$ denote the orthogonal projections onto $\ker(x)$ and $\ker(x^\ast)$, respectively.

The projections $p_{\ker(x)}$ and $p_{\ker(x^\ast)}$ belong both to $M$ and satisfy
$$\tau(p_{\ker(x)}) = \tau(p_{\ker(x^\ast)}).$$
Thus, in particular, if $\ker(x)$ is non-zero, then also $\ker(x^\ast)$ is a non-zero subspace of $H$.
\end{lemma}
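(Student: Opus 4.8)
The plan is to deduce everything from the polar decomposition of $x$ inside $M$, together with the trace property.

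First I would observe that the two projections in question are spectral projections of self-adjoint elements of $M$, hence automatically belong to $M$. Indeed, from $\langle x^\ast x\, \eta, \eta\rangle = \|x\eta\|^2$ for all $\eta\in H$ we get $\ker(x) = \ker(x^\ast x)$, and likewise $\ker(x^\ast) = \ker(xx^\ast)$. Since $x^\ast x$ and $xx^\ast$ are self-adjoint elements of $M$, the Borel functional calculus yields $p_{\ker(x)} = \chi_{\{0\}}(x^\ast x) \in M$ and $p_{\ker(x^\ast)} = \chi_{\{0\}}(xx^\ast) \in M$.

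Next I would invoke the polar decomposition $x = v\lvert x\rvert$, where $\lvert x\rvert := (x^\ast x)^{1/2} \in M$ and $v$ is the partial isometry with initial space $\overline{\ran(\lvert x\rvert)}$ and final space $\overline{\ran(x)}$; the fact that $v$ may be taken inside $M$ is classical (for instance, $v$ is the strong limit of $x(\lvert x\rvert + \epsilon)^{-1}$ as $\epsilon\searrow 0$). Using $\ker(\lvert x\rvert) = \ker(x)$, its source projection is $v^\ast v = p_{\overline{\ran(\lvert x\rvert)}} = p_{\ker(\lvert x\rvert)^\perp} = 1 - p_{\ker(x)}$, and its range projection is $vv^\ast = p_{\overline{\ran(x)}} = p_{\ker(x^\ast)^\perp} = 1 - p_{\ker(x^\ast)}$.

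Finally, the trace property of $\tau$ gives $\tau(v^\ast v) = \tau(vv^\ast)$, that is $\tau(1 - p_{\ker(x)}) = \tau(1 - p_{\ker(x^\ast)})$, whence $\tau(p_{\ker(x)}) = \tau(p_{\ker(x^\ast)})$. The last assertion then follows from faithfulness of $\tau$: if $\ker(x)$ is non-zero then $p_{\ker(x)} \neq 0$, so $\tau(p_{\ker(x^\ast)}) = \tau(p_{\ker(x)}) > 0$, forcing $p_{\ker(x^\ast)} \neq 0$. There is no serious obstacle in this argument; the only steps meriting a word of justification are that the partial isometry in the polar decomposition lies in $M$ and that $\ker(\lvert x\rvert) = \ker(x)$, both of which are standard.
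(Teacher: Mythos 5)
Your proof is correct and follows essentially the same route as the paper: polar decomposition inside $M$, identification of the source and range projections of the partial isometry with $1-p_{\ker(x)}$ and $1-p_{\ker(x^\ast)}$, and traciality of $\tau$ applied to $v^\ast v$ and $vv^\ast$. The only differences are cosmetic --- you supply a bit more detail on why the projections and $v$ lie in $M$, and on the final appeal to faithfulness --- none of which changes the argument.
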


\begin{proof}
We consider the polar decomposition $x = v(x^\ast x)^{1/2} = (x x^\ast)^{1/2} v$ of $x$, where $v\in M$ is a partial isometry mapping $\overline{\ran(x^\ast)}$ to $\overline{\ran(x)}$, such that
$$v^\ast v = p_{\overline{\ran(x^\ast)}} \qquad\text{and}\qquad v v^\ast = p_{\overline{\ran(x)}}.$$
Hence, it follows that
$$1-v^\ast v = p_{\ran(x^\ast)^\bot} = p_{\ker(x)} \qquad\text{and}\qquad 1 - v v^\ast = p_{\ran(x)^\bot} = p_{\ker(x^\ast)},$$
from which we may deduce by traciality of $\tau$ that indeed
$$\tau(p_{\ker(x)}) = \tau(1-v^\ast v) = \tau(1 - v v^\ast) = \tau(p_{\ker(x^\ast)}).$$
This concludes the proof.
\end{proof}

Combining Lemma \ref{key-lemma} with Corollary \ref{reduction-strong} will provide us with the desired reduction argument. Before giving the precise statement, let us introduce some notation. If $p\in M$ is any projection, we define a linear mapping
$$\Delta_{p,j}:\ \C\langle Z_1,\dots,Z_n\rangle \rightarrow \C\langle Z_1,\dots,Z_n\rangle$$
for $j=1,\dots,n$ by 
$$\Delta_{p,j} P := (\tau \otimes \id)((p\otimes 1) (\ev_X \otimes \id)(\partial_j P))$$ 
for any $P\in \C\langle Z_1,\dots,Z_n\rangle$.

\begin{corollary}\label{reduction-final}
Let $P\in \C\langle Z_1,\dots,Z_n\rangle$ and $w=w^\ast\in\vN(X_1,\dots,X_n)$ be given, such that $P(X)w=0$ holds true. If $w\neq0$, then there exists a projection $0\neq p\in\vN(X_1,\dots,X_n)$ such that $(\Delta_{p,j} P)(X) w = 0$.
\end{corollary}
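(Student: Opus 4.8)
The plan is to extract from $P(X)w=0$ the companion condition $P(X)^\ast v = 0$ for some nonzero $v\in\vN(X_1,\dots,X_n)$, so that Corollary \ref{reduction-strong} can be applied, and then to repackage the conclusion of that corollary into the stated form involving $\Delta_{p,j}$. Concretely, set $x := P(X)\in\vN(X_1,\dots,X_n)$. Since $w\neq 0$ and $w=w^\ast$, we have $\ker(x)\neq\{0\}$, hence by Lemma \ref{key-lemma} the projection $p := p_{\ker(x^\ast)}$ lies in $\vN(X_1,\dots,X_n)$, is nonzero, and satisfies $p_{\ker(x^\ast)}\,x^\ast = 0$, i.e. $P(X)^\ast p = 0$. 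Moreover $x w = 0$ gives $P(X)w=0$. Thus the hypotheses of Corollary \ref{reduction-strong} are met with $u := w$ and $v := p$, and we conclude
$$
p\,(\partial_i P)(X)\, w = v^\ast (\partial_i P)(X)\, u = 0 \qquad\text{for all } i=1,\dots,n,
$$
using $p^\ast = p$; more precisely, in the bimodule notation this reads $(p\otimes 1)\,(\partial_i P)(X)\,(1\otimes w) = 0$ in $L^2(X_1,\dots,X_n,\tau\otimes\tau)$.

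The second step is bookkeeping: I would apply the trace-twisted-identity slice $(\tau\otimes\id)$ to the identity $(p\otimes 1)(\partial_j P)(X)(1\otimes w)=0$. Since $(\tau\otimes\id)\big((p\otimes 1)(\ev_X\otimes\id)(\partial_j P)\big) = \Delta_{p,j}P$ evaluated appropriately — unwinding the definition of $\Delta_{p,j}$ and using that $(\tau\otimes\id)$ is bimodule-compatible in the sense that $(\tau\otimes\id)(a\,m\,(1\otimes w)) = (\tau\otimes\id)(a\,m)\,w$ for $m\in M\otimes M$, $a,w\in M$ — one gets
$$
(\Delta_{p,j} P)(X)\,w = (\tau\otimes\id)\big((p\otimes 1)(\partial_j P)(X)(1\otimes w)\big) = 0.
$$
This is exactly the asserted conclusion, with the nonzero projection $p = p_{\ker(P(X)^\ast)}\in\vN(X_1,\dots,X_n)$.

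There is one small subtlety worth flagging: Corollary \ref{reduction-strong} is stated for $u,v\in\vN(X_1,\dots,X_n)$ and $P\in\C\langle Z_1,\dots,Z_n\rangle$, with the output $v^\ast(\partial_i P)(X)u=0$ living in $L^2(M,\tau\otimes\tau)$ (it is the $L^2$-limit from Proposition \ref{Fisher-growth}), so passing the continuous slice map $(\tau\otimes\id): L^2(M,\tau\otimes\tau)\to L^2(M,\tau)$ through it and then right-multiplying by the bounded element $w$ is legitimate; I would note this continuity explicitly rather than treating $(\partial_j P)(X)$ as a genuine element of the algebraic tensor product. The only real content of the proof is the first step — turning the one-sided hypothesis $P(X)w=0$ into the symmetric pair of hypotheses required by Corollary \ref{reduction-strong} — and the tool for that is precisely Lemma \ref{key-lemma}, which is why it was isolated beforehand; everything after that is formal manipulation of slice maps, so I do not anticipate a genuine obstacle, only the need to be careful about where each identity lives ($L^2$ versus the algebraic tensor product).
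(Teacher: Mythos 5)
Your proof is correct and follows essentially the same route as the paper's: you use Lemma \ref{key-lemma} to turn the one-sided hypothesis $P(X)w=0$ into the symmetric pair needed by Corollary \ref{reduction-strong} via $p := p_{\ker(P(X)^\ast)}$, and then you apply the slice map $(\tau\otimes\id)$ together with $(1\otimes w)$ to unwind the definition of $\Delta_{p,j}$. The subtlety you flag about where the identities live is worth noticing, but since $(\partial_j P)(X)$ is a finite sum of elementary tensors and $p,w$ are bounded, everything in fact stays in the algebraic tensor product $M\otimes M$ and no genuine $L^2$ issue arises.
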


\begin{proof}
Since $P(X)w=0$ and $w\neq0$, we see that $\{0\} \neq \ran(w) \subseteq \ker(P(X))$, such that we also must have $\ker(P(X)^\ast) \neq \{0\}$ according to Lemma \ref{key-lemma}. The projection $p := p_{\ker(P(X)^\ast)} \in\vN(X_1,\dots,X_n)$ thus satisfies $p\neq0$ and $P(X)^\ast p = 0$. Corollary \ref{reduction-strong} tells us that $(p\otimes 1) (\partial_j P)(X) (1 \otimes w) = 0$ for $j=1,\dots,n$ holds true. Hence, we get that
\begin{align*}
(\Delta_{p,j} P)(X) w &= (\tau\otimes\id)((p\otimes 1) (\partial_j P)(X)) w\\
                      &= (\tau\otimes\id)((p\otimes 1) (\partial_j P)(X) (1 \otimes w))\\
                      &= 0,
\end{align*}
which concludes the proof.
\end{proof}

Now, we are prepared to finish the proof of Theorem \ref{main2}.

\begin{proof}[Proof of Theorem \ref{main2}]
Obviously, it suffices to show that, if any non-commutative polynomial $P\in\C\langle Z_1,\dots,Z_n\rangle$ and $w\in\vN(X_1,\dots,X_n)$ with $w\neq0$ are given such that $P(X)w=0$, then $P=0$ follows. By possibly replacing $w$ by $ww^\ast$, we may assume in addition that $w=w^\ast$.

For proving $P=0$, we proceed as follows. First, we write
$$P = a_0 + \sum_{k=1}^d \sum_{i_1,\dots,i_k = 1}^n a_{i_1,\dots,i_k} Z_{i_1} \dots Z_{i_k}$$
and we assume that the total degree $d$ of $P$ satisfies $d\geq1$. We choose any summand of highest degree
$$a_{i_1,\dots,i_d} Z_{i_1} \dots Z_{i_d}$$
of $P$ which is non-zero. Iterating Corollary \ref{reduction-final}, we see that there are non-zero projections $p_1,\dots,p_d \in\vN(X_1,\dots,X_n)$ such that
$$(\Delta_{p_d,i_d} \dots \Delta_{p_1,i_1} P)(X) w = 0.$$
But since we can easily check that
$$(\Delta_{p_d,i_d} \dots \Delta_{p_1,i_1} P)(X) = \tau(p_d) \dots \tau(p_1) a_{i_1,\dots,i_d},$$
this leads us to $a_{i_1,\dots,i_d} = 0$, which contradicts our assumption. Thus, $P$ must be constant, and since $w\neq0$, we end up with $P=0$. This concludes the proof of Theorem \ref{main2}.
\end{proof}

We finish by noting that Theorem \ref{main2} yields now the following generalization of Theorem \ref{main1}.

\begin{corollary}
Let $(M,\tau)$ be a tracial $W^\ast$-probability space. Furthermore, let $X_1,\dots,X_n \in M$ be self-adjoint elements and assume that $\delta^\ast(X_1,\dots,X_n) = n$ holds. Then the following statements hold true:
\begin{itemize}
 \item[(a)] $X_1,\dots,X_n$ do not satisfy any non-trivial algebraic relation, i.e. there exists no non-zero polynomial $P\in\C\langle Z_1,\dots,Z_n\rangle$ such that $P(X_1,\dots,X_n)=0$.
 \item[(b)] For $j=1,\dots,n$, there is a unique derivation
$$\hat{\partial}_j:\ \C\langle X_1,\dots,X_n\rangle \rightarrow \C\langle X_1,\dots,X_n\rangle \otimes \C\langle X_1,\dots,X_n\rangle$$
which satisfies $\hat{\partial}_j(X_i) = \delta_{j,i} 1 \otimes 1$ for $i=1,\dots,n$.
\end{itemize}
\end{corollary}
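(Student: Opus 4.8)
The plan is to derive this final corollary directly from Theorem \ref{main2}, exactly as Theorem \ref{main1}(b) was derived from Theorem \ref{main1}(a). The only work to do is part (a): showing that $\delta^\ast(X_1,\dots,X_n)=n$ already forces the absence of non-trivial algebraic relations. This is a special case of Theorem \ref{main2}: an algebraic relation $P(X_1,\dots,X_n)=0$ with $P\neq 0$ is precisely a statement that $P(X_1,\dots,X_n)w=0$ for $w=1\neq 0$. So the implication $\delta^\ast(X_1,\dots,X_n)=n \Rightarrow$ (a) is immediate from Theorem \ref{main2} applied with $w=1$, and one does not even need the weaker hypothesis $\hat\delta^\star(X_1,\dots,X_n)=n$; but since $\delta^\ast=n$ implies $\hat\delta^\star=n$ by the chain of implications recorded in Subsection \ref{subsec:definitions}, either route works.

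Concretely, I would argue as follows. Suppose $P\in\C\langle Z_1,\dots,Z_n\rangle$ is non-zero and $P(X_1,\dots,X_n)=0$. Since $\delta^\ast(X_1,\dots,X_n)=n$ gives in particular $\hat\delta^\star(X_1,\dots,X_n)=n$, we are in the setting of Theorem \ref{main2}; taking $w=1\in\vN(X_1,\dots,X_n)$, which is non-zero, Theorem \ref{main2} tells us that no non-zero polynomial can annihilate it, so $P=0$, a contradiction. This proves (a). For (b), once (a) is known the evaluation homomorphism $\ev_X:\C\langle Z_1,\dots,Z_n\rangle\to\C\langle X_1,\dots,X_n\rangle$ is injective, hence an isomorphism of $\ast$-algebras; transporting the universal derivations $\partial_j$ on $\C\langle Z_1,\dots,Z_n\rangle$ along this isomorphism produces derivations $\hat\partial_j$ on $\C\langle X_1,\dots,X_n\rangle$ with $\hat\partial_j(X_i)=\delta_{j,i}1\otimes 1$, and uniqueness follows from formula \eqref{derivation-uniqueness} (see also Proposition \ref{derivatives_relations} in the appendix), exactly as in the remark following Theorem \ref{main1}.

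There is essentially no obstacle here: the corollary is a formal consequence of Theorem \ref{main2} together with the general-implications chain and the same bookkeeping used for Theorem \ref{main1}(b). The one point worth a sentence of care is making explicit that an ``algebraic relation'' is the $w=1$ instance of a ``zero divisor,'' so that Theorem \ref{main2} genuinely subsumes part (a); and noting that the hypothesis $\delta^\ast=n$ is (by the recorded implications) at least as strong as the $\hat\delta^\star=n$ hypothesis under which Theorem \ref{main2} was actually established in Subsection \ref{subsec:proof_thm}.
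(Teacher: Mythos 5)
Your argument is exactly the one the paper intends: taking $w=1$ in Theorem \ref{main2} gives (a), and (b) then follows from (a) by the same bookkeeping used after Theorem \ref{main1} (transport $\partial_j$ along the isomorphism $\ev_X$, uniqueness from \eqref{derivation-uniqueness} and Proposition \ref{derivatives_relations}). The paper leaves this as an immediate remark (``Theorem \ref{main2} yields now the following generalization of Theorem \ref{main1}''), so your write-up simply spells out that same route.
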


\begin{appendix}

\section{Non-commutative derivatives and algebraic relations}

In order to exclude non-trivial algebraic relations among a collection of self-adjoint variables $X=(X_1,\dots,X_n)$ in some $W^\ast$-probability space $(M,\tau)$, satisfying the condition $\Phi^\ast(X_1,\dots,X_n) < \infty$, we used in Section \ref{algebraic_relations} (to be more precise, in the proof of Proposition \ref{key-prop-1}) a certain reduction argument. This was based on \eqref{reduction}, namely the implication
$$P(X_1,\dots,X_n) = 0 \quad\Longrightarrow\quad \forall j=1,\dots,n:\ (\partial_j P)(X_1,\dots,X_n) = 0.$$
But in fact, this property is furthermore equivalent to the existence of certain non-commutative derivatives on $\C\langle X_1,\dots,X_n\rangle$. These relations are clarified by the following Proposition.

\begin{proposition}\label{derivatives_relations}
Let $(M,\tau)$ be a tracial $W^\ast$-probability space and let self-adjoint $X_1,\dots,X_n\in M$ be given. Then the following statements are equivalent:
\begin{itemize}
 \item[(i)] The variables $X_1,\dots,X_n$ do not satisfy any algebraic relation.
 \item[(ii)] The following implication holds true for any non-commutative polynomial $P\in \C\langle Z_1,\dots,Z_n\rangle$:
 $$P(X_1,\dots,X_n) = 0 \quad\Longrightarrow\quad \forall j=1,\dots,n:\ (\partial_j P)(X_1,\dots,X_n) = 0$$
 \item[(iii)] For each $j=1,\dots,n$, there is a derivation
$$\hat{\partial}_j:\ \C\langle X_1,\dots,X_n\rangle \rightarrow \C\langle X_1,\dots,X_n\rangle \otimes \C\langle X_1,\dots,X_n\rangle$$
such that the following diagram commutes.
\begin{equation}\label{derivation_diagram}
\begin{xy}
 \xymatrix{\C\langle Z_1,\dots, Z_n\rangle \ar[rr]^{\partial_j} \ar@{->>}[dd]_{\ev_X} & & \C\langle Z_1,\dots,Z_n\rangle^{\otimes 2}\ar@{->>}[dd]^{\ev_X \otimes \ev_X}\\
          & & \\
           \C\langle X_1,\dots,X_n\rangle \ar[rr]^{\partial_j} \ar[rr]^{\hat{\partial}_j} & & \C\langle X_1,\dots,X_n\rangle^{\otimes 2}}
\end{xy}
\end{equation}
 \item[(iv)] For each $j=1,\dots,n$, there is a derivation
$$\hat{\partial}_j:\ \C\langle X_1,\dots,X_n\rangle \rightarrow \C\langle X_1,\dots,X_n\rangle \otimes \C\langle X_1,\dots,X_n\rangle$$
such that $\hat{\partial}_j(X_i) = \delta_{j,i} 1 \otimes 1$ for $i=1,\dots,n$.
 \end{itemize}
In particular, if the equivalent conditions (i) -- (iv) are satisfied, then each derivation $\hat{\partial}_j$ in (iii) as well as in (iv) is uniquely determined, and they both coincide.
\end{proposition}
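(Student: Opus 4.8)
The plan is to prove the cyclic chain of implications $(i)\Rightarrow(iii)\Rightarrow(iv)\Rightarrow(iii)$ together with $(i)\Rightarrow(ii)\Rightarrow(i)$ and $(iii)\Rightarrow(ii)$, and then to extract the uniqueness statement from the same bookkeeping. The two purely formal directions come first: $(i)\Rightarrow(ii)$ is trivial, since under $(i)$ the hypothesis $P(X)=0$ already forces $P=0$ and hence $(\partial_jP)(X)=0$; and $(ii)\Rightarrow(i)$ is exactly Proposition \ref{key-prop-2}, which shows that the validity of the implication \eqref{reduction} excludes non-trivial algebraic relations.

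Next I would handle the two conditions involving derivations on $\C\langle X_1,\dots,X_n\rangle$. For $(i)\Rightarrow(iii)$ one uses that under $(i)$ the evaluation homomorphism $\ev_X$ is an algebra isomorphism, so one may simply set $\hat\partial_j := (\ev_X\otimes\ev_X)\circ\partial_j\circ\ev_X^{-1}$; this is a derivation with respect to the $\C\langle X_1,\dots,X_n\rangle$-bimodule structure of $\C\langle X_1,\dots,X_n\rangle^{\otimes 2}$ because $\ev_X$ is an algebra isomorphism and $\ev_X\otimes\ev_X$ intertwines the bimodule actions, and by construction diagram \eqref{derivation_diagram} commutes. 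The step $(iii)\Rightarrow(iv)$ is obtained by evaluating \eqref{derivation_diagram} at $Z_i$, giving $\hat\partial_j(X_i)=(\ev_X\otimes\ev_X)(\partial_jZ_i)=\delta_{j,i}1\otimes1$. For $(iv)\Rightarrow(iii)$ I would observe that both $\hat\partial_j\circ\ev_X$ and $(\ev_X\otimes\ev_X)\circ\partial_j$ are derivations from $\C\langle Z_1,\dots,Z_n\rangle$ into the $\C\langle Z_1,\dots,Z_n\rangle$-bimodule $\C\langle X_1,\dots,X_n\rangle^{\otimes 2}$ (bimodule structure pulled back along $\ev_X$), and that they agree on the generators, since $(\hat\partial_j\circ\ev_X)(Z_i)=\hat\partial_j(X_i)=\delta_{j,i}1\otimes1=(\ev_X\otimes\ev_X)(\partial_jZ_i)$; hence by the uniqueness principle \eqref{derivation-uniqueness} they coincide, which is precisely the commutativity asserted in $(iii)$. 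To close the cycle, $(iii)\Rightarrow(ii)$ follows at once: if $P(X)=0$, then $(\partial_jP)(X)=(\ev_X\otimes\ev_X)(\partial_jP)=\hat\partial_j(\ev_X(P))=\hat\partial_j(0)=0$.

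Finally, for the uniqueness claim I would argue that, assuming $(i)$--$(iv)$, any derivation $\hat\partial_j$ as in $(iv)$ gives rise via $\tilde\partial_j:=(\ev_X^{-1}\otimes\ev_X^{-1})\circ\hat\partial_j\circ\ev_X$ to a derivation on $\C\langle Z_1,\dots,Z_n\rangle$, which by \eqref{derivation-uniqueness} is determined by its values $\tilde\partial_j(Z_i)$; hence $\hat\partial_j$ is determined by the prescribed values $\hat\partial_j(X_i)=\delta_{j,i}1\otimes1$. Since $(iii)\Rightarrow(iv)$, the derivation occurring in $(iii)$ satisfies the same normalization and therefore coincides with the one in $(iv)$.

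I do not anticipate a serious obstacle here; the one point that needs a little care is the verification in the $(iv)\Rightarrow(iii)$ step that the two composites really are derivations into the correct bimodule, i.e. that pulling the $\C\langle X_1,\dots,X_n\rangle$-bimodule structure back along the homomorphism $\ev_X$ is compatible with both constructions. This is a routine check on products, using that $\ev_X$ (resp. $\ev_X\otimes\ev_X$) is an algebra homomorphism (resp. a bimodule map), so the bulk of the argument is genuinely just diagram chasing.
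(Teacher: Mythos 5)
Your proof is correct and follows essentially the same approach as the paper: the equivalence (i) $\Longleftrightarrow$ (ii) via Proposition \ref{key-prop-2}, the construction of $\hat\partial_j$ from $\partial_j$ using that $\ev_X$ is an isomorphism under (i)/(ii), the diagram chase for (iii) $\Longrightarrow$ (ii) and (iii) $\Longrightarrow$ (iv), the appeal to \eqref{derivation-uniqueness} for (iv) $\Longrightarrow$ (iii), and the uniqueness argument at the end all match the paper's proof in substance. The only superficial difference is that you phrase (iv) $\Longrightarrow$ (iii) as ``two derivations into the pulled-back bimodule agreeing on generators'' while the paper plugs directly into the formula \eqref{derivation-uniqueness}; these are the same argument.
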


\begin{proof}
The implication ``(i) $\Longrightarrow$ (ii)'' was already shown in Proposition \ref{key-prop-2} and the converse implication ``(ii) $\Longrightarrow$ (i)'' is trivial.

\medskip

\underline{(ii) $\Longrightarrow$ (iii):} For any element $Y\in \C\langle X_1,\dots,X_n\rangle$, we choose any $P\in\C\langle X_1,\dots,X_n\rangle$ with $Y=P(X_1,\dots,X_n)$ and we put
$$\hat{\partial}_j Y := (\partial_j P)(X_1,\dots,X_n) \qquad\text{for $j=1,\dots,n$}.$$
By assumption (i), this gives a well-defined mapping
$$\hat{\partial}_j:\ \C\langle X_1,\dots,X_n\rangle \rightarrow \C\langle X_1,\dots,X_n\rangle \otimes \C\langle X_1,\dots,X_n\rangle,$$
which is in fact a derivation, as a straightforward calculation shows. Moreover, by definition of $\hat{\partial}_j$, it is clear that the diagram in \ref{derivation_diagram} commutes.

\medskip

\underline{(iii) $\Longrightarrow$ (ii):} Let $P\in\C\langle Z_1,\dots,Z_n\rangle$ with $P(X_1,\dots,X_n)=0$ be given. By assumption, we get
\begin{align*}
(\partial_j P)(X_1,\dots,X_n) &= ((\ev_X \otimes \ev_X) \circ \partial_j)(P)\\
                              &= (\hat{\partial}_j \circ \ev_X)(P)\\
															&= \hat{\partial}_j(P(X_1,\dots,X_n))\\
															&= \hat{\partial}_j(0)\\
															&= 0,
\end{align*}
which shows (ii).

\medskip

\underline{(iii) $\Longrightarrow$ (iv):} Let any $j=1,\dots,n$ be given. If there is a derivation $\hat{\partial}_j$, for which the diagram in \eqref{derivation_diagram} commutes, then we can check that
\begin{align*}
\hat{\partial}_j(X_i) &= \hat{\partial}_j(\ev_X(Z_i))\\
                      &= (\ev_X \otimes \ev_X)(\partial_j Z_i)\\
                      &= (\ev_X \otimes \ev_X)(\delta_{j,i} 1 \otimes 1)\\
                      &= \delta_{j,i} 1 \otimes 1.
\end{align*}
holds for $i=1,\dots,n$, i.e. $\hat{\partial}_j$ satisfies the condition of (iii).

\medskip

\underline{(iv) $\Longrightarrow$ (iii):} If there exits, for any $j=1,\dots,n$, a derivation
$$\hat{\partial}_j:\ \C\langle X_1,\dots,X_n\rangle \rightarrow \C\langle X_1,\dots,X_n\rangle \otimes \C\langle X_1,\dots,X_n\rangle$$
such that $\hat{\partial}_j(X_i) = \delta_{j,i} 1 \otimes 1$ holds for $i=1,\dots,n$, then
$$d_j := \hat{\partial}_j \circ \ev_X:\ \C\langle Z_1,\dots, Z_n\rangle \rightarrow \C\langle X_1,\dots,X_n\rangle^{\otimes 2}.$$
obviously defines a derivation, where we consider $\C\langle X_1,\dots,X_n\rangle^{\otimes 2}$ as $\C\langle Z_1,\dots,Z_n\rangle$-bimodule via evaluation $\ev_X$. Thus, we have for each $P\in\C\langle Z_1,\dots,Z_n\rangle$ according to \eqref{derivation-uniqueness} that
$$d_j(P) = \sum_{i=1}^n (\ev_X\otimes\ev_X)(\partial_i P) \sharp d_j(Z_i).$$
Since $d_j(Z_i) = \hat{\partial}_j(X_i) = \delta_{j,i} 1 \otimes 1$ holds for $i=1,\dots,n$ by assumption, we see that $d_j(P) = (\ev_X\otimes\ev_X)(\partial_j P)$. By definition of $d_j$, this gives
$$(\hat{\partial}_j\circ\ev_X)(P) = d_j(P) = ((\ev_X\otimes\ev_X) \circ \partial_j) (P),$$
which precisely means that the diagram in \eqref{derivation_diagram} commutes.

\medskip

If the equivalent statements (i) -- (iv) are all valid, then the derivations in (iii) and (iv) are uniquely determined. Indeed, if $\hat{\partial}_j$ satisfies (iii) or (iv), then its value on each element in $\C\langle X_1,\dots,X_n\rangle$, represented according to (i) as $P(X_1,\dots,X_n)$ for some unique $P\in\C\langle Z_1,\dots,Z_n\rangle$, must be given by
$$\hat{\partial}_j (P(X_1,\dots,X_n)) = (\hat{\partial}_j \circ \ev_X)(P) = (\ev_X \circ \partial_j)(P) = (\partial_j P)(X_1,\dots,X_n)$$
(according to the commutativity of the diagram in \eqref{derivation_diagram}) or
$$\hat{\partial}_j (P(X_1,\dots,X_n)) = (\hat{\partial}_j\circ\ev_X)(P) = d_j(P) = (\partial_j P)(X_1,\dots,X_n)$$
(according to the proof of ``(iv) $\Longrightarrow$ (iii)''), respectively. Furthermore, we see that both derivations must coincide.
\end{proof}

\end{appendix}

\bibliographystyle{amsalpha}
\bibliography{zero_divisors}

\end{document}